\newtheorem{thm}{Theorem}[section]
\newtheorem{Lemma}[thm]{Lemma}
\newtheorem{Proposition}[thm]{Proposition}
\newtheorem{Corollary}[thm]{Corollary}
\newtheorem*{thm*}{Theorem}
\theoremstyle{definition}
\newtheorem{Problem}[thm]{Problem}
\newtheorem{Definition}[thm]{Definition}
\newtheorem{Remark}[thm]{Remark}
\newtheorem{Example}[thm]{Example}
\definecolor{wwwwww}{rgb}{0.4,0.4,0.4}
\newcommand{\Oh}{\mathscr{O}_{\mathbb{C}^n,0}} 
\renewcommand{\P}{\mathbb{P}}
\newcommand{\C}{\mathbb{C}}
\newcommand{\M}{\mathfrak{m}}
\DeclareMathOperator{\mult}{mult}
\begin{document}

\title{Reconstruction of a Hypersurface Singularity from its Moduli Algebra}

\author[João Hélder Olmedo Rodrigues]{Jo\~ao H\'elder Olmedo Rodrigues}
\address{\sc Jo\~ao H\'elder Olmedo Rodrigues\\
Instituto de Matem\'atica e Estat\'istica, Universidade Federal Fluminense, Rua Prof. Marcos Waldemar de Freitas Reis, Campus Gragoat\'a, Bloco G - S\~ao Domingos\\
24210-201, Niter\'oi, RJ\\ Brazil}
\email{joaohelder@id.uff.br}

\date{\today}
\subjclass[2010]{Primary 32S05, 32S15, 14B05, 14B07, 14H20}
\keywords{Hypersurface singularity, Mather–Yau Theorem, Gaffney-Hauser Theorem, Tjurina algebras, Moduli algebras}

\thanks{Research partially supported by FAPERJ, ARC E-26/211.361/2019}

\begin{abstract}
In this paper we present a constructive method to characterize ideals of the local ring $\Oh$ of germs of holomorphic functions at $0\in\C^n$ which arise as the moduli ideal $\langle f,\M\, j(f)\rangle$, for some $f\in\M\subset\Oh$. A consequence of our characterization is an effective solution to a problem dating back to the 1980's, called the Reconstruction Problem of the hypersurface singularity from its moduli algebra. Our results work regardless of whether the hypersurface singularity is isolated or not. 
\end{abstract}

\maketitle
\tableofcontents

\section{Introduction} 

The main motivation for this research is a problem originated from a celebrated result of J. Mather and S. Yau \cite{MY} in the early eighties, which relates certain isomorphism classes of commutative $\C$-algebras to biholomorphic classes of isolated hypersurface singularities. Few years later, the main Theorem of \cite{MY} was generalized by T. Gaffney and H. Hauser \cite{GH} to the case of non-isolated hypersurface singularities.  These results, as we will explain below, throw important light into the problem of (biholomorphic) classification of hypersurface singularities.

We recall that a germ of a complex hypersurface $(X,0)\subset (\C^n,0)$ at the origin $0\in\C^n$ is defined as the zero set of some - non-trivial - principal ideal $I_X$ of the local ring $\Oh$, the ring of germs of holomorphic functions at $0\in\C^n$. A generator of $I_X$ - which is well-defined modulo multiplication by an invertible element in $\Oh$ -  is said to be an \emph{equation} for $(X,0)$; if $I_X=\langle f\rangle $ we often say that the germ \emph{is defined by} $f$. When convenient, we emphasize this fact writing $(X_f,0)\subset (\C^n,0)$. 

We say that two germs of hypersurfaces $(X_f,0)\subset (\C^n,0)$ and $(X_g,0)\subset (\C^n,0)$ are \emph{biholomorphically equivalent} if there exist small open neighbourhoods $U$ and $V$ of the origin $0\in\C^n$, where $f,\,g$ converge and a (germ of) biholomorphism $\phi:U\rightarrow V$ - which sends the origin to itself - such that $\phi(X_f\cap U)=X_g\cap V$. Put more algebraically, it is easy to verify that this holds if and only if there exists 
an invertible element $u\in\Oh$ such that $ug=\phi^*(f)$. It is often said in this case that the function germs $f,g$ are \emph{contact equivalent}, because they lie in the same orbit of the action of the \emph{contact group} $\mathcal{K}$ on $\Oh$ (see the book \cite{GLS} for definitions). The totality of germs of hypersurfaces all biholomorphically equivalent to one another is said to be a \emph{biholomorphic class}. One of the most famous numerical invariants of a biholomorphic class is the \emph{multiplicity} of its elements. Let $\M$ denote the unique maximal ideal of $\Oh$. We recall that if a system $\textbf{\underline{x}}=x_1,\ldots,x_n$ of generators of $\M\subset \Oh$ is chosen and if $(X_f,0)\subset (\C^n,0)$ is a germ of hypersurface, its multiplicity $\mult(X_f,0)=\mult(f)$ is the smallest degree $m$ of a non-zero homogeneous polynomial appearing in a series expansion $f(\textbf{\underline{x}})=f_m(\textbf{\underline{x}})+f_{m+1}(\textbf{\underline{x}})+\ldots\,\,$ Clearly the multiplicity of a germ doesn't depend on the choice of $\textbf{\underline{x}}=x_1,\ldots,x_n$. So, two biholomorphically equivalent germs of hypersurfaces in $(\C^n,0)$ have the same multiplicity. Another important numerical invariant of a biholomorphic class is the \emph{Tjurina number} of its elements. We recall that the Tjurina number of $(X_f,0)\subset (\C^n,0)$ is defined to be the complex vector space dimension - whenever it is finite - $\tau(X_f)$ of the \emph{Tjurina algebra} of $(X_f,0)\subset (\C^n,0)$, which is defined as the quotient algebra \[ A(X_f) = \Oh/\langle f,j(f)\rangle,\] where $j(f)$ is the ideal generated by the partial derivatives of $f$.  It turns out that $\tau(X_f)$ is finite if and only if $(X_f,0)\subset (\C^n,0)$ has an \emph{isolated singularity} at $0\in\C^n$. Notice that having same multiplicity and same Tjurina number are necessary - but certainly not sufficient - conditions for $(X_f,0)\subset (\C^n,0)$ and $(X_g,0)\subset (\C^n,0)$ to belong to the same biholomorphic class. In fact, the search for a reasonable set of invariants separating biholomorphic classes of hypersurface singularities is an open problem.

Now we introduce the notation necessary to explain how Mather-Yau and Gaffney-Hauser results contribute to this open problem: let $(X_f,0)\subset (\C^n,0)$ be a germ of hypersurface. 
We define the \emph{moduli algebra} of $(X_f,0)$ - or more accurately of $f$ - as the quotient ring \[ B(X_f) = B(f) = \Oh/\langle f,\M\, j(f)\rangle.\] The ideal $\langle f,\M\,j(f)\rangle$ appearing as the denominator will be called the \emph{moduli ideal} of $(X_f,0)\subset (\C^n,0)$ and we denote it as $T_{\mathcal{K}}(f)$. If $g$ is another generator of $I_{X_f}$ it is easy to check that $T_{\mathcal{K}}(f)=T_{\mathcal{K}}(g)$ and this shows that $B(X_f)$ really doesn't depend on the chosen generator for $I_{X_f}$. More generally, we observe that when $(X_f,0)$ and $(X_g,0)$ are biholomorphically equivalent germs of hypersurfaces, then from a relation of type $ug=\phi^*(f)$ as above, it is straightforward to check that the moduli algebras $B(X_f)$ and $B(X_g)$ are \emph{isomorphic} as $\C$-algebras. The converse holds but it is much more subtle, being the essential part of the aforementioned results of Mather-Yau and Gaffney-Hauser, which we state here:

\begin{thm*}[Mather, Yau;\, Gaffney, Hauser]\label{MY}
Let $(X_f,0)\subset (\C^n,0)$ and $(X_g,0)\subset (\C^n,0)$ denote two germs of complex hypersurfaces. The statements are equivalent:
\begin{enumerate}
\item $(X_f,0)\subset (\C^n,0)$ and $(X_g,0)\subset (\C^n,0)$ are biholomorphically equivalent;
\item $B(X_f)$ and $B(X_g)$ are isomorphic as $\C$-algebras.
\end{enumerate}
\end{thm*}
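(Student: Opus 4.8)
The implication $(1)\Rightarrow(2)$ is the elementary half and is essentially carried out in the discussion above: from a relation $ug=\phi^*(f)$ with $u\in\Oh$ invertible and $\phi^*\in\Aut(\Oh)$ the induced coordinate change, one checks $\phi^*(T_{\mathcal{K}}(f))=T_{\mathcal{K}}(\phi^*f)=T_{\mathcal{K}}(ug)=T_{\mathcal{K}}(g)$, the last equality being the invariance of the moduli ideal under multiplication by a unit, and passing to quotients yields $B(X_f)\cong B(X_g)$. So the entire content is the converse $(2)\Rightarrow(1)$, and the guiding idea for it is the infinitesimal interpretation of the moduli ideal. The plan rests on the observation that $T_{\mathcal{K}}(f)=\langle f\rangle+\M\,j(f)$ is precisely the tangent space at $f$ to the orbit $\mathcal{K}\cdot f$: differentiating a one-parameter family $u_s\,\phi_s^*(f)$ with $u_0=1$ and $\phi_0=\mathrm{id}$ produces $a f+\sum_i\xi_i\,\partial_i f$ with $a\in\Oh$ and $\xi_i\in\M$, which ranges exactly over $\langle f\rangle+\M\,j(f)$. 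Thus $B(f)$ is the normal space to the orbit, and the theorem asserts that this first-order datum determines the orbit.

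First I would reduce the hypothesis to an equality of ideals. An isomorphism of $\C$-algebras $B(X_f)\cong B(X_g)$ is in particular an isomorphism of local rings, hence sends maximal ideal to maximal ideal; lifting the images of a coordinate system $x_1,\dots,x_n$ to $\Oh$ defines an endomorphism $\psi$ of $\Oh$ inducing it, and since the induced map on $\M/\M^2$ is bijective, $\psi\in\Aut(\Oh)$, say $\psi=\phi^*$. A chain-rule computation shows $\psi(j(g))=j(\psi g)$ as ideals (the Jacobian of $\psi$ is a unit matrix), whence $\psi\bigl(T_{\mathcal{K}}(g)\bigr)=T_{\mathcal{K}}(\psi g)$. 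Arranging $\phi^*\bigl(T_{\mathcal{K}}(g)\bigr)=T_{\mathcal{K}}(f)$ and replacing $g$ by the contact-equivalent germ $\phi^*g$ — which does not affect the conclusion we seek — we may therefore assume from now on that $T_{\mathcal{K}}(f)=T_{\mathcal{K}}(g)=:I$.

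The main step is then a homotopy argument. Consider the segment $f_t=(1-t)f+tg$, $t\in[0,1]$, and try to apply the infinitesimal Thom--Levine--Mather criterion: if $\dot f_t=g-f$ lies in $T_{\mathcal{K}}(f_t)$ for every $t$, then $f_0=f$ and $f_1=g$ are contact equivalent, the family of contact transformations being recovered by integrating the associated time-dependent vector field and unit. One inclusion is immediate: since $f,g\in I$ and $\M\,j(f),\M\,j(g)\subseteq I$, every $f_t\in I$ and $T_{\mathcal{K}}(f_t)=\langle f_t\rangle+\M\,j(f_t)\subseteq I$, so in particular $g-f\in I$. The hard part will be the reverse inclusion $I\subseteq T_{\mathcal{K}}(f_t)$ — equivalently, constancy of the moduli ideal, hence of the moduli algebra, along the entire segment — which is what guarantees $g-f\in I=T_{\mathcal{K}}(f_t)$. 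My approach would be a Nakayama-type argument: write the relations $g\in\langle f\rangle+\M\,j(f)$, $f\in\langle g\rangle+\M\,j(g)$ and their analogues for the partial derivatives explicitly, deduce $I\subseteq T_{\mathcal{K}}(f_t)+\M\,I$, and conclude by Nakayama's lemma. The genuine obstruction is that $\dim_{\C}B(f_t)$ is a priori only upper semicontinuous in $t$ and might jump at finitely many interior values, where $T_{\mathcal{K}}(f_t)\subsetneq I$ and the infinitesimal criterion would break down; ruling this out, i.e. establishing the constancy of the orbit dimension along the path, is the crux.

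In the isolated case this is where finite determinacy makes the argument rigorous: as $\tau(X_f)<\infty$, the germ $f$ is finitely $\mathcal{K}$-determined, so the whole problem descends to a jet space $\M/\M^{k+1}$ on which the $k$-jets of $\mathcal{K}$ act as a linear algebraic group with locally closed orbits, whose tangent spaces are the images of the $T_{\mathcal{K}}(f_t)$; there Mather's geometric lemma applies, and the two endpoints having isomorphic — hence equidimensional — moduli algebras controls the orbit dimension. For the non-isolated case of Gaffney--Hauser, finite determinacy is unavailable and $\dim_{\C}B(f_t)$ may be infinite, so the jet-space reduction must be replaced by a direct analytic integration of the homotopy vector field; the decisive technical device is an Artin-type approximation, or Tougeron's implicit-function theorem, used to pass from a formal solution of the homological equation $g-f=a_t\,f_t+\sum_i\xi_{t,i}\,\partial_i f_t$ to a convergent one depending nicely on $t$. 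I expect this last passage — solving the homological equation analytically along the path without the crutch of finite determinacy — to be the principal obstacle of the entire argument.
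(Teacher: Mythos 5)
First, a point of calibration: the paper itself does not prove this theorem. It is quoted as background from \cite{MY} and \cite{GH}; only the elementary implication (1)$\Rightarrow$(2) is sketched in the Introduction, and the author explicitly defers the converse to those references. So your attempt has to be measured against the original proofs. Your half (1)$\Rightarrow$(2) and your reduction of (2)$\Rightarrow$(1) to an equality of ideals $T_{\mathcal{K}}(f)=T_{\mathcal{K}}(g)=:I$ (lifting the abstract $\C$-algebra isomorphism to an automorphism of $\Oh$, using that homomorphisms of analytic algebras are automatically local) are correct, and they coincide with the first step of Mather--Yau. The homotopy strategy along $f_t=(1-t)f+tg$ is also the right one.

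The genuine gap is at what you yourself call the crux, and your proposed resolution of it is not repairable as stated. You work with the real segment $t\in[0,1]$ and propose to \emph{rule out} the parameters where $T_{\mathcal{K}}(f_t)\subsetneq I$, asserting that the endpoints having isomorphic, hence equidimensional, moduli algebras ``controls the orbit dimension'' along the path. This is false: such jumps genuinely occur and cannot be excluded. Take $f=x^2+y^2$ and $g=-f$ in $\C\{x,y\}$; these define the \emph{same} hypersurface, with $T_{\mathcal{K}}(f)=T_{\mathcal{K}}(g)=\M^2$, yet $f_{1/2}=0$ and $T_{\mathcal{K}}(f_{1/2})=\langle 0\rangle\subsetneq\M^2$. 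On the real interval the good locus $\{t: T_{\mathcal{K}}(f_t)=I\}$ is then disconnected, and the Thom--Levine/Mather integration cannot cross the bad point, so your argument stalls even in this trivial example. What Mather--Yau actually do is complexify the parameter: for $t\in\C$ the good locus is Zariski open (semicontinuity, available in a jet space after invoking finite determinacy), hence cofinite in $\C$; a cofinite subset of $\C$ is still \emph{connected}, so Mather's lemma applies to the complex curve $\{f_t\}$ restricted to that locus --- constancy of the orbit-tangent dimension is required, and holds, only there --- and $f_0\sim f_1$ follows because both endpoints lie in the good locus. Without the switch from $\R$ to $\C$ and the openness-plus-connectedness mechanism replacing your ``ruling out,'' the main step fails. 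Your sketch of the non-isolated Gaffney--Hauser case inherits the same defect and is otherwise too schematic to evaluate; their published argument is not merely ``the same homotopy plus Artin approximation.''
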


\smallskip

\begin{Remark} A few comments are in order:
\begin{itemize}
\item The original statement of Mather-Yau theorem (cf. \cite{MY}) in the case of isolated hypersurface singularities says that (1) and (2) are also equivalent to ``\emph{$A(X_f)$ and $A(X_g)$ are isomorphic as $\C$-algebras}". In the general case (non-isolated hypersurface singularities) this is not true any more, as shown by a counterexample constructed by Gaffney and Hauser (\cite{GH}).
\item In \cite{GH}, the authors show, by means of the introduction of a quotient module which plays the role of the moduli algebra $B(X_f)$, that a similar assertion holds beyond the case of hypersurface singularities.
\item Diverging from the terminology adopted here, in \cite{MY} the authors originally baptised $A(X_f)$ and $B(X_f)$ as the \emph{moduli algebras} of the germ of hypersurface $(X_f,0)\subset (\C^n,0)$, because the preceding result tells us that the problem of the classification of germs of (isolated) hypersurfaces singularities $(X_f,0)\subset (\C^n,0)$ up to biholomorphic equivalence is equivalent to that of the classification of the algebras $A(X_f)$ or $B(X_f)$ up to $\C$-algebra isomorphism.
\item The proofs presented in \cite{MY} and \cite{GH} are not constructive. Until very recently there was the open problem, called the \emph{Reconstruction Problem}, of reconstructing the (isolated) hypersurface singularity out of its Tjurina algebra. For solutions in special cases we refer to \cite{Y}, \cite{IK}, \cite{E} and the related work \cite{ES}. In \cite{OR} we solved the Reconstruction Problem, at least in the case where the hypersurface can be characterized by its Tjurina algebra. This is precisely the case where the hypersurface singularity is of \emph{Isolated Singularity Type} (cf. \cite{GH} for details). This case includes - strictly - the case of isolated hypersurface singularities. The main purpose of this paper is to show that we can push our techniques a little further and reconstruct the hypersurface out of $B(X_f)$, then closing the remaining cases.
\item Related to the Reconstruction problem is the \emph{Recognition problem}, which is to decide whether a quotient algebra $\Oh/I$ is isomorphic to the Tjurina algebra of some hypersurface singularity $(X_f,0)\subset (\C^n,0)$. This is of course equivalent to recognize whether the ideal $I\subset \Oh$ is a Tjurina ideal $\langle f,j(f)\rangle$ for some $f\in\M\subset\Oh$. This was the approach taken in \cite{OR}.
\end{itemize}
\end{Remark}

To deal with the problem of reconstruction described above for algebras of type $B(X_f)$ we introduce right away our guiding question through this paper, namely

\begin{Problem}\label{pergunta} Fixed $n\geqslant 1$, how to find necessary and sufficient conditions for a (proper) ideal $I\,\subset\Oh$ to be the moduli ideal of some $f\in\M$?
\end{Problem}

Clearly, for all $n\geqslant 1$, the zero ideal and the maximal ideal $\M$ are the moduli ideals of $0$ and $x_1$, say, respectively. 

\begin{Example} If $n=1$ then any ideal $I\subset \mathscr{O}_{\C^1,0}=\C\{x_1\}$ is of the form $I=\M^k=\langle x_1^k\rangle$, $k\geqslant 1$; this is the moduli ideal of $f(x_1)=x_1^{k}$.
\end{Example}

So, if $n=1$, Problem \ref{pergunta} has a trivial solution. However, beginning at $n=2$, this is not true any more. Clearly a necessary condition for an ideal $I\,\subset\Oh$ to be the moduli ideal of some $f\in \Oh$ is that the minimal number of generators of $I$ should be at most $n^2+1$, but this is by no means sufficient as we see in the next example. 

\begin{Example}\label{ex1} For any $n\geqslant 2$ the ideal $I_n=\langle x_1^2,x_2,x_3,\ldots,x_n\rangle\subset\Oh$ is not a moduli ideal. Indeed, since $\dim_{\C}\,\Big(\frac{\Oh}{I_n}\Big)=2$ the Tjurina number of a possible $f$ satisfying $I_n=T_{\mathcal{K}}(f)$ is at most $2$. Up to contact equivalence there are finitely many $f$ such that $\tau(X_f)\leqslant 2$, namely $f_0=x_1$, $f_1=x_1^2+x_2^2+\ldots+x_n^2$ and $f_2=x_1^3+x_2^2+\ldots+x_n^2$. By direct inspection one checks that the corresponding quotients $\frac{\Oh}{T_{\mathcal{K}}(f_i)}$ have $\C$-vector space dimensions $1$, $n+1$ and $n+2$ respectively. Since $n\geqslant 2$, none has $\C$-vector space dimension $2$. \end{Example}

\begin{Remark} The previous example serves as an illustration of the fact that being a moduli algebra is a property that ``depends on the embedding". Indeed, $\Oh/I_n\simeq \mathscr{O}_{\C^1,0}/\langle x_1^2\rangle$ as abstract (or non-embedded) $\C$-algebras but $\langle x_1^2\rangle\subset\mathscr{O}_{\C^1,0}$ is a moduli ideal, while if $n\geqslant 2$, $I_n\subset\Oh$ is not.
\end{Remark}

\begin{Example}\label{guia}Consider the family of powers of the maximal ideal, $\M^k\subset\mathscr{O}_{\C^2,0}=\C\{x,y\}$. If $k\geqslant 5$ no such ideal is a moduli ideal, because they are minimally generated by more than $5=2^2+1$ elements. More interesting are the cases $k\leqslant 4$. If $k\leqslant 3$, we check easily that $\M$, $\M^2$, $\M^3$ are moduli ideals of $x,\,xy$ and $x^3+y^3$ respectively, but notice that these function germs are choices. Indeed, a ``sufficiently general" element of $\M$, $\M^2$ and $\M^3$ respectively, is a function germ of which they are moduli ideals. This trick of taking a sufficiently general element in the ideal under consideration does not work for $\M^4$: we claim that $\M^4$ is not a moduli ideal of any $f\in\mathscr{O}_{\C^2,0}$.

For, since $\dim_{\C}\,\Big(\frac{\mathscr{O}_{\C^2,0}}{\M^4}\Big)=10$, the Tjurina number $\tau=\tau(X_f)$ of a possible $f\in\mathscr{O}_{\C^2,0}$ such that $T_{\mathcal{K}}(f)=\M^4$ is at most $10$. After a check to the Arnold's lists in \cite{Ar} we see that any $f$ with $\tau\leqslant 10$ is quasi-homogeneous; hence $f\in\M\,j(f)\subset j(f)$. It follows from the natural exact sequence \[0\rightarrow j(f)/\M\,j(f)\rightarrow \mathscr{O}_{\C^2,0}/\M\,j(f)\rightarrow \mathscr{O}_{\C^2,0}/j(f)\rightarrow 0,\] that $\dim_{\C}\,\Big(\frac{\mathscr{O}_{\C^2,0}}{T_{\mathcal{K}}(f)}\Big)=\tau+2$. We deduce that the possible $f$ must define $A_8$, $D_8$ or $E_8$ singularities. Up to contact equivalence we can compute with the respective normal forms $x^2+y^9$, $x^2y+y^7$ and $x^3+y^5$ to obtain $\langle f,\M\, j(f)\rangle$ as $\langle x^2,xy,y^9\rangle$, $\langle x^3,x^2y,xy^2,y^7\rangle$ and $\langle x^3,x^2y,xy^4,y^5\rangle$ respectively. All of them have elements of multiplicity smaller than $4$, opposite to $\M^4$. Since multiplicity is invariant under contact equivalence, we conclude that $\M^4$ is not a moduli ideal.
\end{Example}

The above example already contains the main strategy through the paper, which is to split the Problem \ref{pergunta} in two parts. The first part is to answer, given an ideal $I\subset\Oh$, \emph{where to?} search for a solution to the equation $I=T_{\mathcal{K}}(f)$. Trying to answer this question will lead us to the notion of $\Delta_1(I)$, the set of anti-derivatives $I$. This is the natural place inside $\Oh$ to look at in the search for a solution $f$. We treat this in Section \ref{basics}, in which we will also discuss some preliminaries on moduli ideals. In Section \ref{computador} we suggest an easily applicable method for computation of $\Delta_1(I)$ in examples, with routines already implemented in SINGULAR \cite{DGPS}. The second part of our strategy is an effort to give a precise meaning to the somewhat vague term \emph{sufficiently general} used in the previous example. This is carried out in Section \ref{segunda} where we introduce two easily checkable properties on arbitrary ideals of $\Oh$. Our main result, to be presented in Section \ref{teoremas}, is a characterization of the ideals $I$ for which the equation $I=T_{\mathcal{K}}(f)$ admits a solution $f$. In other words, we characterize moduli ideals, by means of which we give an explicit solution for Problem \ref{pergunta}. In the same section we show, in examples, how to use SINGULAR to recognize a moduli ideal and to reconstruct the hypersurface singularity from it.

\subsection*{Acknowledgments}
The author wishes to express his gratitude to T. Gaffney for patiently answering questions related to Mather-Yau type results; and to G.-M. Greuel for bringing to his attention an inaccuracy written in a previous preprint.

\section{Preliminaries}\label{basics}
Let $\Oh$ the local ring of germs of holomorphic functions at $0\in\C^n$ and let $\M$ denote its maximal ideal. For some germ of holomorphic function $f\in\M$ we let $j(f)$ denote the \emph{Jacobian ideal} of $f$, that is, the ideal generated by the (first order) partial derivatives of $f$ with respect to a chosen coordinate system $\textbf{\underline{x}}=x_1,\ldots,x_n$ - minimal set of generators - for $\M$. As indicated in the Introduction, we will be concerned with moduli ideals \[T_{\mathcal{K}}(f)=\langle f,\M j(f)\rangle\] of function germs $f\in\M$. It is easy to verify that for any biholomorphic change of coordinates $\phi:(\C^n,0)\rightarrow (\C^n,0)$ and any $f\in\Oh$, we have $T_{\mathcal{K}}(\phi^*(f))=\phi^*(T_{\mathcal{K}}(f))$. Hence we will fix a coordinate system and we will always compute $T_{\mathcal{K}}(f)$ with respect to this coordinate system. 

The properties below of $T_{\mathcal{K}}$ are immediate to verify:

\begin{Remark}\label{aritmetica} For any $f,g\in\Oh$ we have:
\item (i) $T_{\mathcal{K}}(f+g)\subseteq T_{\mathcal{K}}(f)+T_{\mathcal{K}}(g)$;
\item (ii) $T_{\mathcal{K}}(fg)\subseteq T_{\mathcal{K}}(f)\, T_{\mathcal{K}}(g)$;
\item (iii) $T_{\mathcal{K}}(f)=T_{\mathcal{K}}(g)$ if $\langle f\rangle=\langle g\rangle$.
\end{Remark}

We will find it convenient to introduce the definition below:

\begin{Definition}\label{tdeummodulo}
Let $J\subset \Oh$ the ideal generated by $g_1,\ldots, g_q\in\Oh$. We define the ideal \[T_{\mathcal{K}}(J)=T_{\mathcal{K}}(g_1)+\ldots+T_{\mathcal{K}}(g_q)\subseteq \Oh.\]
\end{Definition}

Notice that this is a well-posed definition since $T_{\mathcal{K}}(J)$ does not depend on the generators $g_i$ chosen but only on the ideal they generate, as the reader can check using $(i),\,(ii)$ in Remark \ref{aritmetica}. Although Problem \ref{pergunta} only mentions moduli ideals of function germs (or rather of principal ideals, cf.(iii) of Remark \ref{aritmetica}), it seems fair to refer to $T_{\mathcal{K}}(J)$ as the \emph{moduli ideal} of $J$.

\begin{Remark} The properties below are easy to verify.
\begin{itemize}
\item[(i)] $J\subseteq T_{\mathcal{K}}(J)$;
\item[(ii)] If $J_1\subseteq J_2$ is an inclusion of ideals then $T_{\mathcal{K}}(J_1)\subseteq T_{\mathcal{K}}(J_2)$;
\item[(iii)] If $\{J_{\lambda}\}_{\lambda\in\Lambda}$ is any family of ideals of $\Oh$ then $T_{\mathcal{K}}(\sum_{\lambda}J_{\lambda})=\sum_{\lambda}T_{\mathcal{K}}(J_{\lambda})$;
\item[(iv)] $T_{\mathcal{K}}(J_1\cap J_2)\subseteq T_{\mathcal{K}}(J_1)\cap T_{\mathcal{K}}(J_2)$.
\end{itemize}
\end{Remark}

Now we introduce the main object - which is an adapted version of the object used in \cite{OR} and - that will ultimately lead us to our solution to Problem \ref{pergunta}.

\begin{Definition}\label{delta}
Let $I\subset\Oh$ be an ideal. We define the \emph{ideal of anti-derivatives} of $I$ as \[\Delta_1(I)=\{f\in\Oh\,|\,T_{\mathcal{K}}(f)\subseteq I\}.\]
\end{Definition}

It is easy to show that $\Delta_1(I)$ does not depend on the choice of parameters $\textbf{\underline{x}}=x_1,\ldots,x_n$ used to compute Jacobian ideals. Moreover, the properties below are straightforward to check.

\begin{Remark}\label{easy}
\item[(i)] $\Delta_1(I)$ is an ideal of $\Oh$;
\item[(ii)] $I^2\subseteq \Delta_1(I)\subseteq I$;
\item[(iii)] If $\{I_{\lambda}\}_{\lambda\in\Lambda}$ is any family of ideals of $\Oh$ then $\Delta_1(\bigcap_{\lambda}I_{\lambda})=\bigcap_{\lambda}\Delta_1(I_{\lambda})$;
\item[(iv)] If $I\subseteq J$ is an inclusion of ideals in $\Oh$ then $\Delta_1(I)\subseteq \Delta_1(J)$;
\end{Remark}


\begin{Example}\label{principal1}Let $I=\langle f^k \rangle\subset\Oh$, $n\geqslant 2$, be a principal ideal with $k\geqslant 1$ and $f$ being an irreducible element. Then $\Delta_1(I)=\langle f^{k+1}\rangle$. Indeed, one inclusion is immediate. For the opposite inclusion, assume $g\in \Delta_1(I)$ and write $g=af^k$, for some $a\in\Oh$. For all $i,j=1,\ldots,n$, $x_i\frac{\partial g}{\partial x_j}=x_if^k \frac{\partial a}{\partial x_j}+akf^{k-1}x_i\frac{\partial f}{\partial x_j}$ is also a multiple of $f^k$. Hence $f$ must divide $ax_i\frac{\partial f}{\partial x_j}$, for all $i,j=1,\ldots,n$. Since $f$ is irreducible, for every $i,j=1,\ldots,n$, $f$ must divide $x_i$ or $\frac{\partial f}{\partial x_j}$ or $a$. If $f$ divides $\frac{\partial f}{\partial x_j}$ for all $j=1,\ldots,n$ then $\langle \frac{\partial f}{\partial x_1},\ldots, \frac{\partial f}{\partial x_n}\rangle\subseteq \langle f \rangle$ and we would deduce that $f=0\in\Oh$, which is not the case. Since $n\geqslant 2$, $f$ cannot divide all the $x_i$. So, for some $i,j$, $f$ does not divide $x_i\frac{\partial f}{\partial x_j}$. Being an irreducible element, $f$ must divide $a$. Hence $g\in\langle f^{k+1}\rangle$.\end{Example}

\begin{Example}\label{principal2} Let $I=\langle f\rangle \subset \Oh$, $n\geqslant 2$, be a non-trivial principal ideal and $f=f_1^{k_1}\ldots f_r^{k_r}$ be a factorization of $f$ into irreducible, non-associated, elements with positive $k_1,\ldots,k_r$. Then $I=\langle  f_1^{k_1}\rangle \cap\ldots\cap \langle  f_r^{k_r}\rangle $. It follows from Remark \ref{easy}, (iii) and Example \ref{principal1} that $\Delta_1(I)=\langle f_1^{k_1+1}\rangle \cap\ldots\cap \langle  f_r^{k_r+1}\rangle =\langle  f_1^{k_1+1}\ldots f_r^{k_r+1}\rangle $.
\end{Example}

\begin{Example}\label{potenciadem} Let $\M^k$, with $k\geqslant 1$, denote the $k$-th power of the maximal ideal of $\Oh$. Let $g=x_1^{k_1}\ldots x_n^{k_n}$ with $k_j\geqslant 0$ and $\sum k_j=k$ be a monomial generator of $\M^k$. For any pair $i,j=1,\ldots,n$ it is easy to check that $x_i\frac{\partial g}{\partial x_j}\in\M^k$. Hence $T_{\mathcal{K}}(g)\subset \M^k$, which shows $\Delta_1(\M^k)=\M^k$.\end{Example}

\begin{Example}\label{outrodelta}Let $I_n=\langle x_1^2,x_2,\ldots,x_n\rangle\subset\Oh$, $n\geqslant 2$, as in Example \ref{ex1}. Then $\Delta_1(I_n)=\M^2$. Indeed, since $\M^2\subset I_n$ then according to Remark \ref{easy}, item (iv) and the previous example we have $\M^2=\Delta_1(\M^2)\subseteq\Delta_1(I_n)$. For the other inclusion, if $g=a_1x_1^2+a_2x_2+\ldots+a_nx_n\in \Delta_1(I_n)$, computing $\frac{\partial g}{\partial x_j}$, we see that $a_j\in (I_n:\M)=\M$, for all $j\geqslant 2$. Hence $a_2x_2+\ldots+a_nx_n\in\M^2$ and we conclude that $g\in \M^2$.
\end{Example}

\section{Computation of $\Delta_1$}\label{computador}

Up to now we have obtained the ideal of anti-derivatives $\Delta_1(I)$ in a few specific cases (cf. Examples \ref{principal2}, \ref{potenciadem}, \ref{outrodelta}). In this section we suggest a method to compute $\Delta_1(I)$ for any ideal $I\subset \Oh$, then showing that it is accessible in concrete computations. We will illustrate our procedure with examples obtained using basic routines already implemented in the software SINGULAR, \cite{DGPS}.

Fix a basis $\textbf{\underline{x}}=x_1,\ldots,x_n$ of the maximal ideal $\M\subset\Oh=\C\{x_1,\ldots,x_n\}$. A free $\Oh$-module of certain rank $\ell$ will be denoted by $F_{\ell}$; for any $\underline{b}=(b_1,\ldots,b_{\ell})^{t}$, $\underline{c}=(c_1,\ldots,c_{\ell})^t\in F_{\ell}$ we use a dot notation $\underline{b}\cdot\underline{c}$ to represent $\sum_{k=1}^{\ell}b_k c_k\in \Oh$.

Assume the ideal $I$ given by generators, $I=\langle f_1,\ldots,f_{\ell}\rangle $. Then some element $g=\underline{a}\cdot\underline{f} = \sum_k a_kf_k\in I$ is an element of $\Delta_1(I)$ if and only if, for all $i,j=1,\ldots,n$, $$x_i\frac{\partial g}{\partial x_j}=\sum_k a_k x_i\frac{\partial f_k}{\partial x_j}+\sum_k \frac{\partial a_k}{\partial x_j}x_if_k=\underline{a}\cdot x_i\frac{\partial \underline{f}}{\partial x_j}+\frac{\partial \underline{a}}{\partial x_j}\cdot x_i\underline{f}$$ belongs to $I$. In other words, $g=\underline{a}\cdot\underline{f}\in \Delta_1(I)$ if and only if $\underline{a}\cdot x_i \frac{\partial \underline{f}}{\partial x_j}\in I,$ for all $i,j=1,\ldots,n$.

For each $i,j=1,\ldots,n$ we denote by $E^{i,j}$ the submodule of $F_{\ell}$ consisting of the elements $\underline{a}\in F_{\ell}$ such that $\underline{a}\cdot x_i \frac{\partial \underline{f}}{\partial x_j}\in I$. Then $E:=\bigcap_{i,j=1}^n\,E^{i,j}\subseteq F_{\ell}$ is a finitely generated $\Oh$-submodule of $F_{\ell}$ consisting of all elements $\underline{a}\in\,F_{\ell}$ such that $g=\underline{a}\cdot\underline{f}\in\Delta_1(I)$. \smallskip

Notation being as above we have

\begin{Proposition}\label{compdelta} Let $\underline{e}_{\,1},\ldots,\underline{e}_{\,q}\in F_{\ell}$ be generators of $E$. Then the ideal of anti-derivatives $\Delta_1(I)$ is generated by $\underline{e}_{\,t}\cdot\underline{f}$, for $t=1,\ldots,q$.
\end{Proposition}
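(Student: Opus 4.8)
The plan is to reinterpret the whole setup through the natural $\Oh$-linear evaluation map and then to exploit the containment $\Delta_1(I)\subseteq I$. First I would introduce the $\Oh$-module homomorphism $\phi\colon F_{\ell}\to\Oh$ defined by $\phi(\underline{a})=\underline{a}\cdot\underline{f}=\sum_k a_k f_k$. Its image is precisely $I=\langle f_1,\ldots,f_{\ell}\rangle$, so $\phi\colon F_{\ell}\to I$ is surjective. By the discussion immediately preceding the statement, the submodule $E\subseteq F_{\ell}$ is exactly the preimage $\phi^{-1}(\Delta_1(I))$: an element $\underline{a}$ lies in $E$ if and only if $\phi(\underline{a})\in\Delta_1(I)$.

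The key observation is then that $\phi(E)=\Delta_1(I)$, and not merely $\phi(E)\subseteq\Delta_1(I)$. The inclusion $\phi(E)\subseteq\Delta_1(I)$ is immediate from $E=\phi^{-1}(\Delta_1(I))$. For the reverse inclusion I would invoke Remark \ref{easy}(ii), which gives $\Delta_1(I)\subseteq I=\operatorname{Im}\phi$. Thus every $g\in\Delta_1(I)$ can be written as $g=\phi(\underline{a})$ for some $\underline{a}\in F_{\ell}$, and such an $\underline{a}$ automatically lies in $E=\phi^{-1}(\Delta_1(I))$; hence $g\in\phi(E)$. Phrased set-theoretically, $\phi\bigl(\phi^{-1}(\Delta_1(I))\bigr)=\Delta_1(I)\cap\operatorname{Im}\phi=\Delta_1(I)$, the last equality being exactly where the containment $\Delta_1(I)\subseteq I$ is used.

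With $\phi(E)=\Delta_1(I)$ in hand, the conclusion is formal. Since $\phi$ is $\Oh$-linear and $E$ is generated as an $\Oh$-module by $\underline{e}_{\,1},\ldots,\underline{e}_{\,q}$, the image $\phi(E)$ is generated by the elements $\phi(\underline{e}_{\,t})=\underline{e}_{\,t}\cdot\underline{f}$, for $t=1,\ldots,q$. Combining this with $\phi(E)=\Delta_1(I)$ yields $\Delta_1(I)=\langle\underline{e}_{\,1}\cdot\underline{f},\ldots,\underline{e}_{\,q}\cdot\underline{f}\rangle$, as claimed.

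I do not expect a genuine obstacle here: the statement is essentially a bookkeeping consequence of the module-theoretic reformulation, and the only point requiring care is to make explicit use of $\Delta_1(I)\subseteq I$ so that the surjectivity of $\phi$ onto $I$ upgrades the trivial inclusion $\phi(E)\subseteq\Delta_1(I)$ into an equality. (Finite generation of $E$ is guaranteed by the Noetherianity of $\Oh$, but in any case it is hypothesized in the statement.)
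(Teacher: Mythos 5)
Your proposal is correct and is essentially the paper's own argument: the paper likewise observes (via the discussion preceding the statement, which is exactly your identity $E=\phi^{-1}(\Delta_1(I))$ together with $\Delta_1(I)\subseteq I=\operatorname{Im}\phi$) that $g\in\Delta_1(I)$ if and only if $g=\underline{a}\cdot\underline{f}$ for some $\underline{a}\in E$, and then concludes by $\Oh$-linearity of $(-)\cdot\underline{f}$. Your write-up merely makes explicit, through the evaluation map $\phi$, the role of the containment $\Delta_1(I)\subseteq I$ that the paper leaves implicit.
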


\begin{proof}
As already discussed, $g\in \Delta_1(I)$ if and only if $g=\underline{a}\cdot\underline{f}$ for some $\underline{a}\in E$. Since $E$ is generated by all the $\underline{e}_{\,t}$, the result follows from $\Oh$-linearity of the $(-)\cdot\underline{f}$ product.
\end{proof} 

Here we show how we used the software SINGULAR, \cite{DGPS} to compute the ideal of anti-derivatives of ideals to be presented in several of our examples. We claim no originality, since only routines already implemented by the software developers and collaborators were applied.

\begin{Example}\label{quatrovariaveis}Let $I=\langle yz,z^3,xw,w^2\rangle\subset \mathscr{O}_{\C^4,0}=\C\{x,y,z,w\}$. With the interface of SINGULAR open, type

\begin{flushleft}
\texttt{> ring r=0,(x,y,z,w),ds;}
\end{flushleft}

\noindent This declares you are working over a field of characteristic zero, variables $x,y,z,w$ and set the corresponding ring of power series. We now declare the generators of the ideal $I$ by means of a matrix with one row and (in the present case) four columns: type \begin{flushleft}
\texttt{> matrix B[1][4]=yz,z3,xw,w2;}
\end{flushleft}

\noindent Now to compute the submodule $E^{1,1}$, (same notation as above), we declare a matrix with entries the partial derivatives of the given generators in terms of $x$ multiplied by $x$. We compute $E^{1,1}$ as follows: 

\begin{flushleft}
 \texttt{> matrix e11[1][4]=x*diff(B,x); def E11=modulo(e11,B)};
 \end{flushleft}
 
\noindent Likewise, compute $E^{1,2}$, $E^{1,3}$, $E^{1,4}$, $E^{2,1}$, $E^{2,2}$, $E^{2,3}$, $E^{2,4}$, $E^{3,1}$, $E^{3,2}$, $E^{3,3}$, $E^{3,4}$, $E^{4,1}$, $E^{4,2}$, $E^{4,3}$, $E^{4,4}$:

\begin{flushleft}
\begin{flushleft}
 \texttt{> matrix e12[1][4]=x*diff(B,y); def E12=modulo(e12,B)};
 \end{flushleft}
 \begin{flushleft}
 \texttt{> matrix e13[1][4]=x*diff(B,z); def E13=modulo(e13,B)};
 \end{flushleft}
 \vdots
\begin{flushleft}
 \texttt{> matrix e44[1][4]=w*diff(B,w); def E44=modulo(e44,B)};
 \end{flushleft} 
 
\end{flushleft}

\noindent Now, we put

\begin{flushleft}
\texttt{> def e=intersect(E11,E12,E13,E14,E21,E22,E23,E24,E31,E32,E33,E34,E41,E42,E43,E44);}\\
\texttt{> def E=std(e);}
\end{flushleft}

\noindent This defines $E$ and computes a standard basis, with respect to the given monomial order. Now, to obtain $E$, type
\begin{flushleft}
\texttt{> print(E);}
\end{flushleft}

\noindent SINGULAR gives

\medskip

\noindent\makebox[\textwidth]{%
\small
\(
\left( \begin{array}{ccccccccccccccc}
0 & 0 & 0 & 0 & 0 & 0 & 0  & 0  & yz & 0   & z^2 & xw & zw & w^2 & 0\\
0 & y & z & 0 & 0 & w & 0  & 0  & 0  & 0   & 0   & 0  & 0  & 0   & 0\\
0 & 0 & 0 & 0 & w & 0 & 0  & yz & 0  & 0   & 0   & 0  & 0  & 0   & z^3\\
x & 0 & 0 & w & 0 & 0 & yz & 0  & 0  & z^2 & 0   & 0  & 0  & 0   & 0\\
\end{array} \right)\)}

\medskip

\noindent Hence, as in Proposition \ref{compdelta}, we obtain generators of $\Delta_1(I)$ computing \[(-)\cdot(yz,z^3,xw,w^2),\] for all (transposed) columns of the previous matrix. We speed up calculations typing 

\begin{flushleft}
\texttt{> ideal d=B*E;}\\
\texttt{> ideal D=std(d);}\\
\texttt{> D;}
\end{flushleft}

The output lists the generators of $\Delta_1(I)$. In the case under consideration we obtain the ten generated monomial ideal below: \[\Delta_1(I)=\langle xw^2,w^3,y^2z^2,yz^3,z^4,xyzw,yz^2w,z^3w,yzw^2,z^2w^2\rangle.\]
\end{Example}

\begin{Example}\label{tresvariaveis}Let $I=\langle x^3,x^2y,x^2z,xy^3z,y^4z,xy^2z^2,y^3z^2,y^2z^3\rangle \subset \C\{x,y,z\}$.

\smallskip

With the interface of SINGULAR open, type

\begin{flushleft}
\texttt{> ring r=0,(x,y,z),ds;}\\
\texttt{> matrix B[1][8]=x3,x2y,x2z,xy3z,y4z,xy2z2,y3z2,y2z3;}
\end{flushleft}
\noindent Compute $E^{1,1}$,$E^{1,2}$, $E^{1,3}$, $E^{2,1}$, $E^{2,2}$, $E^{2,3}$, $E^{3,1}$, $E^{3,2}$, $E^{3,3}$:
\begin{flushleft}
 \texttt{> matrix e11[1][8]=x*diff(B,x); def E11=modulo(e11,B)};
\end{flushleft}
\begin{flushleft}
\begin{flushleft}
 \texttt{> matrix e12[1][8]=x*diff(B,y); def E12=modulo(e12,B)};
 \end{flushleft}
 \vdots
\begin{flushleft}
 \texttt{> matrix e33[1][8]=z*diff(B,z); def E33=modulo(e33,B)};
 \end{flushleft} 
\end{flushleft}
\begin{flushleft}
\texttt{> def e=intersect(E11,E12,E13,E21,E22,E23,E31,E32,E33);}\\
\texttt{> def E=std(e);}\\
\texttt{> print(E);}
\end{flushleft}

\noindent This time, SINGULAR gives a $8\times 14$ matrix. Again we obtain generators of $\Delta_1(I)$ computing \[(-)\cdot(x^3,x^2y,x^2z,xy^3z,y^4z,xy^2z^2,y^3z^2,y^2z^3),\] for all (transposed) columns of the matrix mentioned.

\begin{flushleft}
\texttt{> ideal d=B*E;}\\
\texttt{> ideal D=std(d);}\\
\texttt{> D;}
\end{flushleft}

The output lists the generators of $\Delta_1(I)$. In the case under consideration we obtain the ideal whose generators are listed below:

\begin{flushleft}
\texttt{D[1]=x3}\\
\texttt{D[2]=x2y2z}\\
\texttt{D[3]=y3z2}\\
\end{flushleft}

\end{Example}

\medskip

\begin{Example} \label{duasvariaveis}
Let $I=\langle 3xy^2+x^6,y^3,x^5y,x^7\rangle\subset \mathscr{O}_{\C^2,0}=\C\{x,y\}$. We repeat a procedure analogous as those in previous examples.
\begin{flushleft}
\texttt{> ring r=0,(x,y),ds;}
\end{flushleft}
\begin{flushleft}
\texttt{> matrix B[1][4]=3xy2+x6,y3,x5y,x7;}
\end{flushleft}
\begin{flushleft}
\begin{flushleft}
\texttt{> matrix e11[1][4]=x*diff(B,x); def E11=modulo(e11,B)};
 \end{flushleft}
\begin{flushleft}
 \texttt{> matrix e12[1][4]=x*diff(B,y); def E12=modulo(e12,B)};
 \end{flushleft}
 \begin{flushleft}
 \texttt{> matrix e21[1][4]=y*diff(B,x); def E21=modulo(e21,B)};
 \end{flushleft}
 \begin{flushleft}
 \texttt{> matrix e22[1][4]=y*diff(B,y); def E22=modulo(e22,B)};
 \end{flushleft} 
 \end{flushleft}
\begin{flushleft}
\texttt{> def e=intersect(E11,E12,E21,E22);}\\
\texttt{> def E=std(e);}\\
\texttt{> print(E);}
\end{flushleft}

\noindent In this case, SINGULAR gives

\medskip

\noindent\makebox[\textwidth]{%
\small
\(
\left( \begin{array}{cccccc}
0 & 0 & 0 & 0 & y & x^3 \\
0 & 1 & x & y & 0 & 0 \\
0 & 1 & 0 & 0 & 0 & 0 \\
1 & 0 & 0 & 0 & 0 & 0 \\
\end{array} \right)\)}

\medskip

\noindent Again, as in Proposition \ref{compdelta}, generators of $\Delta_1(I)$ are obtained computing \[(-)\cdot(3xy^2+x^6,y^3,x^5y,x^7),\] for all (transposed) columns of the previous matrix. We do this as 

\begin{flushleft}
\texttt{> ideal d=B*E;}\\
\texttt{> ideal D=std(d);}\\
\texttt{> D;}
\end{flushleft}

We obtain $\Delta_1(I)=\langle y^3+x^5y, x^4y^2,x^7,x^6y\rangle.$
\end{Example}

\section{Two properties of ideals in $\Oh$}\label{segunda}

In this section we introduce the two relevant conditions present in our characterization (see Theorem \ref{main}) of moduli ideals of hypersurface singularities in $\Oh$.

\subsection*{$T_{\mathcal{K}}$-fullness}

Here we introduce $T_{\mathcal{K}}$-fullness, a quite natural notion related to our Problem \ref{pergunta}. Recall the definition of the moduli ideal of an arbitrary ideal of $\Oh$ (cf. Definition  \ref{tdeummodulo}) and observe that, in general, we have $T_{\mathcal{K}}(\Delta_1(I))\subseteq I$.

\begin{Definition}\label{tfullness} Let $I$ be an ideal of $\Oh$. We say that $I$ is \emph{$T_{\mathcal{K}}$-full} if $T_{\mathcal{K}}(\Delta_1(I))=I$.\end{Definition}

\begin{Example}\label{naotfull} Let $I_n=\langle x_1^2,x_2,\ldots,x_n\rangle\subset\Oh$, $n\geqslant 2$ as in Example \ref{ex1}. We have seen in Example \ref{outrodelta} that $\Delta_1(I_n)=\M^2$. A routine calculation now shows that $T_{\mathcal{K}}(\Delta_1(I_n))=T_{\mathcal{K}}(\M^2)=\M^2\subsetneq I_n$ so $I_n$ is not $T_{\mathcal{K}}$-full.\end{Example}

The general significance of $T_{\mathcal{K}}$-fullness in our investigation is apparent in the next result, which is, in view of Example \ref{naotfull}, an alternative proof that $I_n\subset\Oh$, as in Example \ref{ex1}, is not a moduli ideal.

\begin{Proposition}\label{tfullehnecess} Let $I$ be an ideal of $\Oh$. If $I$ is a moduli ideal then $I$ is $T_{\mathcal{K}}$-full.\end{Proposition}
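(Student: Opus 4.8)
The plan is to prove the two inclusions separately, noting that one of them comes for free from the excerpt. The inclusion $T_{\mathcal{K}}(\Delta_1(I))\subseteq I$ holds for \emph{every} ideal $I$ and is exactly the general fact recorded just before Definition \ref{tfullness}; it requires no hypothesis on $I$. So the entire content of the proposition is the reverse inclusion $I\subseteq T_{\mathcal{K}}(\Delta_1(I))$, and this is where the assumption that $I$ is a moduli ideal must be used.

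First I would unwind the hypothesis: by definition, $I$ being a moduli ideal means $I=T_{\mathcal{K}}(f)$ for some $f\in\M$. The key step is the simple observation that this very $f$ lies in $\Delta_1(I)$. Indeed, by Definition \ref{delta} membership $f\in\Delta_1(I)$ amounts to the condition $T_{\mathcal{K}}(f)\subseteq I$, and since $I=T_{\mathcal{K}}(f)$ this is just the tautology $T_{\mathcal{K}}(f)\subseteq T_{\mathcal{K}}(f)$. Hence $f\in\Delta_1(I)$, and consequently the principal ideal satisfies $\langle f\rangle\subseteq\Delta_1(I)$.

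From here I would invoke the monotonicity of the operator $T_{\mathcal{K}}$ on ideals (the property that $J_1\subseteq J_2$ implies $T_{\mathcal{K}}(J_1)\subseteq T_{\mathcal{K}}(J_2)$, recorded immediately after Definition \ref{tdeummodulo}). Applying it to the inclusion $\langle f\rangle\subseteq\Delta_1(I)$ gives $T_{\mathcal{K}}(\langle f\rangle)\subseteq T_{\mathcal{K}}(\Delta_1(I))$. Since $\langle f\rangle$ is generated by the single element $f$, its moduli ideal in the sense of Definition \ref{tdeummodulo} is precisely $T_{\mathcal{K}}(f)$, so this reads $I=T_{\mathcal{K}}(f)\subseteq T_{\mathcal{K}}(\Delta_1(I))$. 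Combining this with the free inclusion $T_{\mathcal{K}}(\Delta_1(I))\subseteq I$ yields the desired equality $T_{\mathcal{K}}(\Delta_1(I))=I$, i.e.\ $I$ is $T_{\mathcal{K}}$-full.

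I do not expect a genuine obstacle here: the statement is the ``easy'' (necessity) half of the eventual characterization, and the argument is essentially formal once one spots that a defining equation $f$ of $I$ is automatically an anti-derivative of $I$. The only point demanding a little care is bookkeeping between the two meanings of $T_{\mathcal{K}}$ — the operator on functions and its extension to ideals in Definition \ref{tdeummodulo} — and checking that they agree on a principal ideal, which they do by construction. The substantive difficulty of Problem \ref{pergunta} will lie entirely in the converse direction (sufficiency), not in this proposition.
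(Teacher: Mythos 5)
Your proof is correct and follows essentially the same route as the paper: both hinge on the observation that $f\in\Delta_1(I)$ whenever $I=T_{\mathcal{K}}(f)$, and then deduce $I=T_{\mathcal{K}}(f)\subseteq T_{\mathcal{K}}(\Delta_1(I))\subseteq I$. The only cosmetic difference is that you invoke the recorded monotonicity of $T_{\mathcal{K}}$ on ideals applied to the inclusion $\langle f\rangle\subseteq\Delta_1(I)$, whereas the paper expands $f=\sum_k r_kg_k$ in generators of $\Delta_1(I)$ and applies Remark \ref{aritmetica} inline --- which is exactly the computation underlying that monotonicity property.
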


\begin{proof}
Let $\Delta_1(I)=\langle g_1,\ldots,g_q\rangle $. If $I$ is a moduli ideal, then there exists $f\in\Delta_1(I)$ such that $I=T_{\mathcal{K}}(f)$. We may write $f=\sum_{k=1}^q r_k g_k$, for some $r_k\in \Oh$. Using Remark \ref{aritmetica}, we obtain $I=T_{\mathcal{K}}(f)\subseteq \sum_{k=1}^q T_{\mathcal{K}}(r_k g_k)\subseteq \sum_{k=1}^q  T_{\mathcal{K}}(g_k)=T_{\mathcal{K}}(\Delta_1(I))\subseteq I$ and equality holds throughout. We conclude that $I$ is $T_{\mathcal{K}}$-full.
\end{proof}

\begin{Example}(Example \ref{quatrovariaveis}, continued) We have computed the anti-derivatives ideal of $$I=\langle yz,z^3,xw,w^2\rangle\subset \C\{x,y,z,w\}$$ as $\Delta_1(I)=\langle xw^2,w^3,y^2z^2,yz^3,z^4,xyzw,yz^2w,z^3w,yzw^2,z^2w^2\rangle$. We check easily that $T_{\mathcal{K}}(\Delta_1(I))\subsetneq I$. Hence $I$ is not a moduli ideal because it is not $T_{\mathcal{K}}$-full.\end{Example}

Being $T_{\mathcal{K}}$-full is, however, not sufficient for $I$ to be a moduli ideal.

\begin{Example}\label{cota} Let $k\geqslant 5$ and let $I=\M^k\subset\mathscr{O}_{\C^2,0}=\C\{x,y\}$. Certainly $I$ is not a moduli ideal (see Example \ref{guia}). However, we have seen (cf. Example \ref{potenciadem}) that $\Delta_1(I)=I$. It is equally easy to check that $T_{\mathcal{K}}(\Delta_1(I))=I$, which implies that $I$ is $T_{\mathcal{K}}$-full.\end{Example}

\subsection*{$T_{\mathcal{K}}$-dependence} 

We have shown above that $T_{\mathcal{K}}$-fullness is not sufficient for an ideal to be a moduli ideal. Here we explain the last ingredient needed, in addition to  $T_{\mathcal{K}}$-fullness (cf. Definition \ref{tfullness}), to characterize moduli ideals. 

We aim to give a precise meaning to the intuition hinted already in Example \ref{guia}, that of a general linear combination $f$ of the given generators of $\Delta_1(I)$ has the largest possible $T_{\mathcal{K}}(f)$ and could reveal whether a given ideal $I$ is a moduli ideal. Our results and definitions in this subsection are expressed geometrically, since it seemed to us more appropriate to explain these ideas. To this end, we use basic concepts on schemes, consistent with \cite{H}, Chapter II, to which we refer for terminology.

We regard ideals in $\Oh$ as ideal sheaves on the affine scheme $\mbox{Spec}\,\Oh$. Let $J\subset \Oh$ be any ideal and assume that $J$ is given by generators: $J=\langle g_1,\ldots,g_q\rangle \subset \Oh$; then we consider the projective $(q-1)$-space over $\Oh$, namely, $\P^{q-1}=\mbox{Proj}(S)$, where $S=\Oh[\alpha_1,\ldots,\alpha_{q}]=\bigoplus_{d\geqslant 0}\,S_d$ is the standard polynomial ring over $\Oh$, with variables $\alpha_i$, graded so that $\deg\alpha_i=1$, for all $i$.

Let $\pi:\P^{q-1}\rightarrow \mbox{Spec}\,\Oh$ be the natural morphism of $\mbox{Spec}\,\Oh$-schemes and let $\sigma$ denote the global section $\sum_{i=1}^q\,g_i\alpha_i$ of $\pi^*(J)\otimes \mathcal{O}_{\P^{q-1}}(1)$. Then there is a \emph{moduli ideal sheaf} of $\sigma$ on $\P^{q-1}$, namely $\mathscr{T}_{\mathcal{K}}(\sigma)$, the sheaf associated to the homogeneous ideal $\langle \sigma,\M\frac{\partial \sigma}{\partial x_1},\ldots,\M\frac{\partial \sigma}{\partial x_n}\rangle $ of $S$. Clearly $\mathscr{T}_{\mathcal{K}}(\sigma)$ is a subsheaf of $\pi^*(T_{\mathcal{K}}(J))$, so we can take the quotient sheaf $$\mathscr{F}=\frac{\pi^*(T_{\mathcal{K}}(J))}{\mathscr{T}_{\mathcal{K}}(\sigma)}$$ on $\P^{q-1}$. Since $\mathscr{F}$ is coherent and $\P^{q-1}$ is noetherian, the support $\mbox{Supp}\mathscr{F}$ of $\mathscr{F}$ is a closed subscheme of $\P^{q-1}$ given by the vanishing of $\Big(\mathscr{T}_{\mathcal{K}}(\sigma):\pi^*(T_{\mathcal{K}}(J))\Big)$.

\begin{Definition}\label{tdependence} We say that an ideal $J\subset\Oh$ is \emph{$T_{\mathcal{K}}$-dependent} if $\pi^{-1}(\M)\not\subset\mbox{Supp}\mathscr{F}$. 
\end{Definition}

We check that $T_{\mathcal{K}}$-dependence for $J$ is a well-defined concept, being independent on the choice of generators for $J$. For, let the ideal $J$ be given by another system of generators, say $J=\langle h_1,\ldots,h_u \rangle $. Let $\P^{u-1}=\mbox{Proj}(S')$, being $S'=\Oh[\,\beta_1,\ldots,\beta_{u}]$. The above construction can be carried out with the obvious morphism $\pi':\P^{u-1}\rightarrow \mbox{Spec}\,\Oh$ and section $\sigma'=\sum_{j=1}^u\,h_j\beta_j$ instead of $\pi$ and $\sigma$, obtaining a sheaf $\mathscr{F'}$ on $\P^{u-1}$.

\begin{Lemma}Keeping notation as above, we have $\pi^{-1}(\M)\not\subset\mbox{Supp}\mathscr{F}$ if and only if $\pi'^{-1}(\M)\not\subset\mbox{Supp}\mathscr{F'}$.\end{Lemma}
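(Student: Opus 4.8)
The plan is to reduce the invariance claim to a statement about the two projective spaces $\P^{q-1}$ and $\P^{u-1}$ being glued over a common refinement, and then to show that the support conditions match on the fibre over the closed point $\M$. Since $J=\langle g_1,\ldots,g_q\rangle=\langle h_1,\ldots,h_u\rangle$, each generator of one system is an $\Oh$-linear combination of the generators of the other; write $h_j=\sum_i c_{ji}g_i$ and $g_i=\sum_j d_{ij}h_j$ with $c_{ji},d_{ij}\in\Oh$. First I would use these transition matrices to relate the two sections $\sigma=\sum_i g_i\alpha_i$ and $\sigma'=\sum_j h_j\beta_j$. The natural idea is to introduce the product $\P^{q-1}\times_{\Spec\Oh}\P^{u-1}$, or rather to work on each space and compare, observing that the linear change of coordinates $\alpha_i\mapsto\sum_j d_{ij}\beta_j$ (and its counterpart) sends $\sigma$ to a multiple of $\sigma'$ modulo the relations, carrying $\mathscr{T}_{\mathcal{K}}(\sigma)$ to $\mathscr{T}_{\mathcal{K}}(\sigma')$ compatibly with $\pi^*(T_{\mathcal{K}}(J))$, which does not depend on the generating set by Definition \ref{tdeummodulo}.

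The key computation is that the colon ideal $\big(\mathscr{T}_{\mathcal{K}}(\sigma):\pi^*(T_{\mathcal{K}}(J))\big)$ cutting out $\Sup\mathscr{F}$, when restricted to the fibre $\pi^{-1}(\M)=\P^{q-1}_{\C}$, depends only on the $\C$-vector space of linear forms $\sum_i \bar g_i\alpha_i$ that one obtains after reducing modulo $\M$. Concretely, I would pass to the fibre over $\M$ on both sides: over the residue field $\C$, the section $\sigma$ becomes a linear form in the $\alpha_i$ with coefficients the images $\bar g_i\in\Oh/(\text{appropriate ideal})$, and the condition $\pi^{-1}(\M)\not\subset\Sup\mathscr{F}$ becomes the statement that at some point of the fibre the generator $\sigma$ (together with its derivative data) actually generates the full stalk of $\pi^*(T_{\mathcal{K}}(J))$. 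The transition matrices $(c_{ji})$ and $(d_{ij})$ induce, on the fibres, a linear isomorphism between the relevant spaces of sections that is compatible with both the moduli-ideal-sheaf construction and the pulled-back ideal $\pi^*(T_{\mathcal{K}}(J))$; hence a point of $\P^{q-1}_{\C}$ lies outside $\Sup\mathscr{F}$ exactly when its image lies outside $\Sup\mathscr{F}'$.

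Thus the heart of the argument is to verify that $\mathscr{T}_{\mathcal{K}}$ is natural with respect to the linear change of generators, i.e. that under the substitution the homogeneous ideal $\langle\sigma,\M\partial_{x_1}\sigma,\ldots,\M\partial_{x_n}\sigma\rangle$ transforms into the corresponding ideal for $\sigma'$, at least after the passage to the fibre and up to the automorphisms of the two projective spaces induced by the transition matrices. The main obstacle I anticipate is precisely bookkeeping the effect of differentiation: when I substitute $\sigma=\sum_j(\sum_i d_{ij}g_i?)\cdots$, the product rule introduces derivatives $\partial c_{ji}/\partial x_k$ of the transition functions, so $\partial_{x_k}\sigma$ is not simply the pullback of $\partial_{x_k}\sigma'$ but differs by correction terms of the form $\sum(\partial_{x_k}c_{ji})g_i\beta_j$. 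I would argue that all such correction terms already lie in $\pi^*(T_{\mathcal{K}}(J))$ (since each $g_i\in J\subseteq T_{\mathcal{K}}(J)$, and $\M\cdot(\partial_{x_k}c_{ji})g_i$ is swallowed once multiplied by the $\M$-factor), so that they contribute nothing to the colon ideal after quotienting, and hence do not affect $\Sup\mathscr{F}$. Making this "correction terms are negligible" step precise is where the real care is needed; once it is settled, comparing the two fibres over $\M$ gives the desired equivalence and completes the proof that $T_{\mathcal{K}}$-dependence is well defined.
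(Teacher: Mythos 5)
Your overall route is the same as the paper's: use the transition matrix between the two generating sets to define a graded substitution between $S'$ and $S$, hence a map of projective spaces compatible with $\sigma$ and $\sigma'$, and compare the two support conditions along the fibre over $\M$. But the step you yourself flag as the crux --- disposing of the correction terms coming from differentiating the transition functions --- is resolved by a non sequitur, and this is a genuine gap. Your argument is that the corrections lie in $\pi^*(T_{\mathcal{K}}(J))$ and therefore ``contribute nothing to the colon ideal after quotienting.'' That is backwards: since $\mathscr{F}=\pi^*(T_{\mathcal{K}}(J))/\mathscr{T}_{\mathcal{K}}(\sigma)$, enlarging the denominator by elements of the numerator is exactly the kind of change that alters $\mathscr{F}$, the colon ideal $\bigl(\mathscr{T}_{\mathcal{K}}(\sigma):\pi^*(T_{\mathcal{K}}(J))\bigr)$, and $\mbox{Supp}\mathscr{F}$ (in the extreme, adding all of the numerator makes $\mathscr{F}=0$ and the support empty). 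Moreover the two ideal sheaves genuinely differ in general. Take $J=\langle x^2,y^2\rangle\subset\C\{x,y\}$, $g_1=x^2+xy^2$, $g_2=y^2$, $h_1=x^2$, $h_2=y^2$, so that $\Phi(\beta_1)=\alpha_1$ and $\Phi(\beta_2)=x\alpha_1+\alpha_2$ with the paper's substitution $\beta_j\mapsto\sum_i r_{ji}\alpha_i$. On $D_+(\alpha_1)$, writing $t=\alpha_2/\alpha_1$, the ideal $\mathscr{T}_{\mathcal{K}}(\sigma)$ is generated by $x^2+xy^2+y^2t$, $\M(2x+y^2)$ and $\M\,2y(x+t)$, while the pullback of $\mathscr{T}_{\mathcal{K}}(\sigma')$ is generated by $x^2+xy^2+y^2t$, $\M\,2x$ and $\M\,2y(x+t)$. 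The element $y(2x+y^2)$ lies in the first ideal but not in the second: the $\Oh$-algebra map $t\mapsto-x$ sends every generator of the second ideal into $\langle x\rangle$, whereas $y(2x+y^2)=2xy+y^3\notin\langle x\rangle$. So ``the corrections lie in the numerator'' cannot, by itself, prove that the support is unaffected.

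What actually closes the argument is a finer fact plus Nakayama. First, the substitution must go the way the paper sets it up: with $g_i=\sum_j r_{ji}h_j$, the map $\Phi(\beta_j)=\sum_i r_{ji}\alpha_i$ pulls $\sigma'$ back to $\sigma$ \emph{exactly} (your map $\alpha_i\mapsto\sum_j d_{ij}\beta_j$ does not send $\sigma$ to $\sigma'$, even ``modulo relations''). Then the corrections to the generators $\M\,\partial_{x_k}\sigma$ are $\M\cdot\sum_j h_j\,\partial_{x_k}\bigl(\Phi(\beta_j)\bigr)$, whose coefficients lie in $J\subseteq T_{\mathcal{K}}(J)$; hence the corrections lie in $\M\cdot\pi^*(T_{\mathcal{K}}(J))$ --- the extra factor of $\M$ is essential. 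Consequently $\mathscr{T}_{\mathcal{K}}(\sigma)$ and the pullback ideal have the same sum with $\M\cdot\pi^*(T_{\mathcal{K}}(J))$, and at a point $P$ of the fibre $\pi^{-1}(\M)$, where $\M$ sits inside the maximal ideal of $\mathcal{O}_{\P^{q-1},P}$, Nakayama's Lemma shows that one of them equals $\pi^*(T_{\mathcal{K}}(J))_P$ if and only if the other does; a second application of Nakayama descends this from the stalk at $P$ to the stalk at $\varphi(P)$, giving the equivalence of the two support conditions. You also leave out two needed checks that the paper makes: that $\pi^{-1}(\M)$ is not contained in the indeterminacy locus of $\varphi$ (this uses that some $r_{ji}$ is a unit, itself a Nakayama argument when $J\neq 0$), and that $\mbox{Supp}\mathscr{F}$ being closed in the irreducible $\P^{q-1}$ lets one pick a single point of $\pi^{-1}(\M)\cap U$ off the support. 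For what it is worth, the paper's proof asserts $\mathscr{T}_{\mathcal{K}}(\sigma)\vert_U=\varphi^*(\mathscr{T}_{\mathcal{K}}(\sigma'))$ outright, and the example above shows that this literal equality can fail; so your instinct that this point needs real care is sound --- but the care required is the $\M$-factor plus Nakayama along the fibre, not membership in $\pi^*(T_{\mathcal{K}}(J))$.
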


\begin{proof} We will only show that $\pi^{-1}(\M)\not\subset\mbox{Supp}\mathscr{F}$ implies $\pi'^{-1}(\M)\not\subset\mbox{Supp}\mathscr{F'}$. The proof of the other implication is analogous. 

Since the $g_i$'s and the $h_j$'s generate the same ideal $J$, we can write for all $i$, $g_i=\sum_jr_{ji}h_j$ for some $r_{ji}\in\Oh$ and at least one $r_{ji}$ is invertible in $\Oh$. We construct an homomorphism of graded $\Oh$-algebras $\Phi:S'\rightarrow S$  (preserving degrees) given by $\beta_j\mapsto \sum_ir_{ji}\alpha_i$. This induces a natural morphism $\varphi: U\rightarrow\P^{u-1}$, of $\Oh$-schemes, where $U\subset\P^{q-1}$ is the complement of the indeterminacy locus of $\varphi$, given by the vanishing of $\langle \Phi(\beta_1),\ldots,\Phi(\beta_u)\rangle$ in $\P^{q-1}$. Notice that $\pi^{-1}(\M)$ is not contained in the locus of indeterminacy of $\varphi$ because $\Phi(\beta_j)\not\in\M S$ for at least one $j$. Observe also that the construction of $\varphi$ implies both $\sigma\vert_{U}=\varphi^*(\sigma')$ and $\pi\vert_{U}=\pi'\circ \varphi$. In particular, $\mathscr{T}_{\mathcal{K}}(\sigma)\vert_{U}=\varphi^*(\mathscr{T}_{\mathcal{K}}(\sigma'))$ and $\pi^*(T_{\mathcal{K}}(J))\vert_{U}=\varphi^*\pi'^*(T_{\mathcal{K}}(J))$. According to the definition of $\mathscr{F}$ and $\mathscr{F'}$, we deduce $\mathscr{F}\vert_{U}=\varphi^{*}\mathscr{F}'$.

Assume $\pi^{-1}(\M)\not\subset\mbox{Supp}\mathscr{F}$. As observed above $\mbox{Supp}\mathscr{F}$ is closed in the irreducible $\P^{q-1}$. It follows that there exists some $P\in\pi^{-1}(\M)\cap U$, $P\not\in \mbox{Supp}\mathscr{F}$. Since $U\cap \mbox{Supp}\mathscr{F}=\varphi^{-1}(\mbox{Supp}\mathscr{F}')$, we have $\varphi(P)\not\in\mbox{Supp}\mathscr{F'}$ and $\M S'\subset\Phi^{-1}(\M S)\subset\Phi^{-1}(P)=\varphi(P)$. Therefore $\varphi(P)\in \pi'^{-1}(\M)$, showing that $\pi'^{-1}(\M)\not\subset\mbox{Supp}\mathscr{F'}$.\end{proof}

Now that we have defined $T_{\mathcal{K}}$-fullness and $T_{\mathcal{K}}$-dependence, we present some examples showing that the two notions are independent of each other.

\begin{Example}\label{principal5} If $J\subseteq \Oh$ is a principal ideal then $J$ is $T_{\mathcal{K}}$-dependent. Indeed, in this case we see that $\mbox{Supp}\mathscr{F}\subset\P^0$ is empty, opposite to $\pi^{-1}(\M)$ which is not. It is easy to give examples of principal ideals which are not $T_{\mathcal{K}}$-full.\end{Example}

\begin{Example} Let $J=\langle x^2,y\rangle \subset \C\{x,y\}$. We have seen before (cf. Example \ref{naotfull}) that $J$ fails to be $T_{\mathcal{K}}$-full. Let us show that $J$ is $T_{\mathcal{K}}$-dependent. Here $\pi^*(T_{\mathcal{K}}(J))$ is the sheaf associated to $T_{\mathcal{K}}(J)S=\M S.$ Let $\sigma=x^2\alpha_1+y\alpha_2$. Then the sections $\sigma,\,x\frac{\partial \sigma}{\partial x},x\frac{\partial \sigma}{\partial y},y\frac{\partial \sigma}{\partial x}, y\frac{\partial \sigma}{\partial y}$ generate $\mathscr{T}_{\mathcal{K}}(\sigma)$. We investigate the support of $\mathscr{F}$ in $\P^{1}$. We can compute generators of $\Big(\mathscr{T}_{\mathcal{K}}(\sigma):\pi^*(T_{\mathcal{K}}(J))\Big)$ as $\langle \alpha_1x,\alpha_2\rangle\subset S$. Hence the fiber $\pi^{-1}(\M)$ is not contained in the support of $\mathscr{F}$, showing that $J$ is $T_{\mathcal{K}}$-dependent.\end{Example}

\begin{Example} Let $J=\M^4\subset \C\{x,y\}$. Clearly $J$ is $T_{\mathcal{K}}$-full (cf. Examples \ref{potenciadem}, \ref{cota}). Let us show that $J$ fails to be $T_{\mathcal{K}}$-dependent. Notations being as before, $\pi^*(T_{\mathcal{K}}(J))$ is the sheaf in $\P^4$ associated to $T_{\mathcal{K}}(J)S=J S.$ \,\,\,Let \[\sigma=x^4\alpha_1+x^3y\alpha_2+x^2y^2\alpha_3+xy^3\alpha_4+y^4\alpha_5.\] Then $\sigma,\,x\frac{\partial \sigma}{\partial x},x\frac{\partial \sigma}{\partial y},y\frac{\partial \sigma}{\partial x}, y\frac{\partial \sigma}{\partial y}$ are generators of $\mathscr{T}_{\mathcal{K}}(\sigma)$. We check that the support of $\mathscr{F}$ contains $\pi^{-1}(\M)$. To do this we show, with help of SINGULAR, that $\Big(\mathscr{T}(\sigma):\pi^*(T_{\mathcal{K}}(J))\Big)\subset\M S$. We proceed as follows:

\begin{flushleft}
\texttt{
\noindent > ring r=0,(x,y,a(1..5)),ds;\\
> ideal m=x,y;\\
> ideal M=std(m);\\
> poly s=a(1)*x4+a(2)*x3y+a(3)*x2y2+a(4)*xy3+a(5)*y4;\\
> ideal t=s,m*diff(s,x),m*diff(s,y);\\
> ideal q=quotient(t,m4);\\
> ideal Q=std(q);\\
> Q;\\}
\end{flushleft}
SINGULAR presents a list of generators of $\Big(\mathscr{T}(\sigma):\pi^*(T_{\mathcal{K}}(J))\Big)$; all of them belong to $\M S$ as we readily see typing
\begin{flushleft}
\texttt{
> reduce(Q,M);}
\end{flushleft}
\end{Example}

\section{Main Result}\label{teoremas}

The purpose of this Section is to state and prove our result characterizing moduli ideals, then solving Problem \ref{pergunta}. We also derive Corollary \ref{pratico} which has as a consequence an explicit solution to both the Recognition and the Reconstruction problems mentioned in the Introduction, even for the case of non-isolated hypersurface singularities. 

\begin{thm}\label{main} Let $I\subset\Oh$ be an ideal. Then $I$ is a moduli ideal if and only if $I$ is $T_{\mathcal{K}}$-full and $\Delta_1(I)$ is $T_{\mathcal{K}}$-dependent.
\end{thm}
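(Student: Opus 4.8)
The plan is to reduce both implications to a single base-change computation that converts the scheme-theoretic condition of Definition \ref{tdependence} into a statement about ordinary moduli ideals $T_{\mathcal{K}}(f)$. Throughout I fix a \emph{minimal} system of generators $g_1,\ldots,g_q$ of $J:=\Delta_1(I)$ and form $\P^{q-1}=\Proj\Oh[\alpha_1,\ldots,\alpha_q]$, the section $\sigma=\sum_i g_i\alpha_i$, and the coherent sheaf $\mathscr{F}=\pi^*(T_{\mathcal{K}}(J))/\mathscr{T}_{\mathcal{K}}(\sigma)$ exactly as in that definition. Whenever $I$ is $T_{\mathcal{K}}$-full we have $T_{\mathcal{K}}(J)=I$, so $\pi^*(T_{\mathcal{K}}(J))$ is the pullback $\pi^*(I)$. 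The bridge I would establish is: for a $\C$-point $P=[c_1:\cdots:c_q]\in\pi^{-1}(\M)$, writing $f_P=\sum_i c_ig_i$, there is a canonical identification
\[ \mathscr{F}\otimes\kappa(P)\;\cong\;I\big/\big(T_{\mathcal{K}}(f_P)+\M I\big). \]
To prove it I would tensor the defining sequence $\mathscr{T}_{\mathcal{K}}(\sigma)\to\pi^*(I)\to\mathscr{F}\to 0$ with $\kappa(P)=\C$ (right exactness), use that $\pi^*(I)\otimes\kappa(P)=I/\M I$, and observe that the generators $\sigma,\,x_m\partial\sigma/\partial x_l$ of $\mathscr{T}_{\mathcal{K}}(\sigma)$ specialise at $P$ to $f_P,\,x_m\partial f_P/\partial x_l$, whose images span exactly $(T_{\mathcal{K}}(f_P)+\M I)/\M I$ (the $\M$ in $I/\M I$ collapses all $\Oh$-multiples to $\C$-multiples). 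Since $f_P\in\Delta_1(I)$ forces $T_{\mathcal{K}}(f_P)\subseteq I$, Nakayama's lemma gives $\mathscr{F}\otimes\kappa(P)=0\iff T_{\mathcal{K}}(f_P)=I$, and hence $P\notin\operatorname{Supp}\mathscr{F}\iff T_{\mathcal{K}}(f_P)=I$.

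For the direction ($\Leftarrow$), assume $I$ is $T_{\mathcal{K}}$-full and $\Delta_1(I)$ is $T_{\mathcal{K}}$-dependent. The latter says $\pi^{-1}(\M)\not\subseteq\operatorname{Supp}\mathscr{F}$; since $\operatorname{Supp}\mathscr{F}$ is closed and the fibre $\pi^{-1}(\M)\cong\P^{q-1}_{\C}$ is a variety over the algebraically closed field $\C$, its $\C$-points are dense, so the nonempty open set $\pi^{-1}(\M)\setminus\operatorname{Supp}\mathscr{F}$ contains a $\C$-point $P=[c_1:\cdots:c_q]$. Minimality of the $g_i$ guarantees $f_P=\sum_i c_ig_i\neq 0$, and the bridge yields $T_{\mathcal{K}}(f_P)=I$, exhibiting $I$ as the moduli ideal of $f_P$.

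For the direction ($\Rightarrow$), suppose $I=T_{\mathcal{K}}(f)$. Then $T_{\mathcal{K}}$-fullness is immediate from Proposition \ref{tfullehnecess}, whence $T_{\mathcal{K}}(\Delta_1(I))=I$; it remains to prove $\Delta_1(I)$ is $T_{\mathcal{K}}$-dependent. As $f\in\Delta_1(I)$ I may write $f=\sum_i a_ig_i$ with $a_i\in\Oh$, and the crucial point is that at least one $a_i$ is a unit. Indeed, were all $a_i\in\M$, then $f\in\M\Delta_1(I)\subseteq\M I=\M T_{\mathcal{K}}(f)=\langle\M f,\M^2 j(f)\rangle$; extracting the unit from a relation $f=pf+q$ with $p\in\M$, $q\in\M^2 j(f)$ would force $f\in\M^2 j(f)$, impossible by comparing orders, since $\ord f=d$ while $\ord(\M^2 j(f))=2+\ord(j(f))=d+1$ (additivity of $\ord$ on the domain $\operatorname{gr}_{\M}\Oh$, with $\ord(j(f))=d-1$ by Euler's identity applied to the leading form). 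With some $a_i$ a unit, $[a_1:\cdots:a_q]$ defines a section $s:\Spec\Oh\to\P^{q-1}$ of $\pi$ whose value $P=s(\M)$ is a $\C$-point of $\pi^{-1}(\M)$. Pulling back along $s$ and using $\pi\circ s=\mathrm{id}$ gives $s^*\pi^*(I)=I$, while the generators of $\mathscr{T}_{\mathcal{K}}(\sigma)$ pull back to $f$ and to $x_m\partial f/\partial x_l$ modulo terms lying in $\M\Delta_1(I)\subseteq\M I$; one more application of Nakayama yields $s^*\mathscr{T}_{\mathcal{K}}(\sigma)=I$. Therefore $s^*\mathscr{F}=0$, so $P\notin\operatorname{Supp}\mathscr{F}$ with $P\in\pi^{-1}(\M)$, giving $\pi^{-1}(\M)\not\subseteq\operatorname{Supp}\mathscr{F}$, i.e.\ $T_{\mathcal{K}}$-dependence.

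I expect the hard part to be the bridge of the first paragraph: making rigorous the identification of $\mathscr{F}\otimes\kappa(P)$ with $I/(T_{\mathcal{K}}(f_P)+\M I)$, where the geometric definition of $T_{\mathcal{K}}$-dependence must be reconciled — through right-exactness of the base changes and repeated use of Nakayama — with the concrete ideal $T_{\mathcal{K}}(f_P)$. A secondary but indispensable technical point is the order comparison that rules out the degenerate case in which the candidate section $s$ is undefined at the closed point; this is precisely what guarantees that the witness point produced in the ($\Rightarrow$) direction genuinely lies in the fibre $\pi^{-1}(\M)$.
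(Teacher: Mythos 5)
Your proof is correct, and although it follows the same skeleton as the paper's (fullness via Proposition \ref{tfullehnecess}; dependence tested at points of the closed fibre $\pi^{-1}(\M)$ against constant-coefficient combinations of the $g_k$), the mechanism you use for the key equivalence is genuinely different. The paper works with \emph{stalks}: it proves an estimate of the form $T_{\mathcal{K}}\bigl(\sum_k(\lambda_k-\lambda_{\ell}\alpha_k/\alpha_{\ell})g_k\bigr)S_{(P)}\subseteq IP_{(P)}$ and applies Nakayama over the local ring $S_{(P)}$; in the ($\Leftarrow$) direction this leaves the extra task of descending the equality of extended ideals $T_{\mathcal{K}}(f)S_{(P)}=IS_{(P)}$ to the equality $I=T_{\mathcal{K}}(f)$ in $\Oh$, which the paper handles by a divisibility-and-degree argument in the UFD $S$ (in substance, faithful flatness of $\Oh\to S_{(P)}$). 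Your bridge $\mathscr{F}\otimes\kappa(P)\cong I/\bigl(T_{\mathcal{K}}(f_P)+\M I\bigr)$ eliminates that step: by passing to the \emph{fibre} rather than the stalk, the decisive Nakayama application happens over $\Oh$ itself, where $T_{\mathcal{K}}(f_P)+\M I=I$ gives $T_{\mathcal{K}}(f_P)=I$ outright because $f_P\in\Delta_1(I)$ forces $T_{\mathcal{K}}(f_P)\subseteq I$; moreover a single identification serves both implications, whereas the paper repeats its stalk computation in each direction. The bridge computation itself is sound: specialization at a $\C$-point commutes with differentiation precisely because the $c_i$ are constants, and the image of the generators fills all of $(T_{\mathcal{K}}(f_P)+\M I)/\M I$ because $\M f_P+\M^2 j(f_P)\subseteq\M I$. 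Your density-of-closed-points argument in $\pi^{-1}(\M)\cong\P^{q-1}_{\C}$ replaces the paper's decomposition $S=\C[\alpha_1,\ldots,\alpha_q]+\M S$, and your section $s$ determined by $[a_1:\cdots:a_q]$ replaces the paper's reduction to constant coefficients via Remark \ref{aritmetica}; both substitutions are correct and, if anything, tidier.

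Two repairs are needed, both minor. First, split off the case $I=\langle 0\rangle$, as the paper does explicitly: your unit-extraction argument requires $f\neq 0$ (otherwise $\ord f$ is not finite), and your insistence on a \emph{minimal} generating set breaks down for $\Delta_1(\langle 0\rangle)=\langle 0\rangle$, whose minimal generating set is empty, so that the associated $\Proj$ is empty and $T_{\mathcal{K}}$-dependence would fail vacuously even though $\langle 0\rangle=T_{\mathcal{K}}(0)$ is a moduli ideal; Definition \ref{tdependence} tolerates the generating set $\{0\}$, under which the zero ideal is unproblematic. Second, your order-comparison detour through $\operatorname{gr}_{\M}\Oh$ and Euler's identity can be shortcut: if all $a_i\in\M$, then $f\in\M\Delta_1(I)$, so writing $f=\sum_k s_kh_k$ with $s_k\in\M$, $h_k\in\Delta_1(I)$ and using Remark \ref{aritmetica} gives $I=T_{\mathcal{K}}(f)\subseteq\sum_k T_{\mathcal{K}}(s_k)\,T_{\mathcal{K}}(h_k)\subseteq\M I$, whence $I=\langle 0\rangle$ by Nakayama over $\Oh$, contradicting the case already excluded. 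This is almost certainly the argument the paper intends at the corresponding point of its proof, and it spares you the graded-ring machinery.
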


\begin{proof} Let $\Delta_1(I)=\langle g_1,\ldots,g_q\rangle $ and let $\mathscr{F}$ denote the sheaf $\pi^*(T_{\mathcal{K}}(\Delta_1(I)))/\mathscr{T}_{\mathcal{K}}(\sigma)$ on $\P^{q-1}$, as described before. For any $\lambda=(\lambda_1,\ldots,\lambda_q)\in\C^q\setminus \{0\}$ we will denote by $p_{\lambda}\subseteq \C[\alpha_1,\ldots,\alpha_{q}]$ the homogeneous prime ideal generated by $\{\lambda_k\alpha_{\ell}-\lambda_{\ell}\alpha_k\}_{k,{\ell}}$. If $P\in D_+(\alpha_{\ell})\subset\P^{q-1}$ one checks easily that $\mathscr{T}_{\mathcal{K}}(\sigma)_P=T_{\mathcal{K}}\Big(\sigma/\alpha_{\ell}\Big)S_{(P)}$, where parenthetical notation $-\,_{(P)}$ (here and in what follows) indicates the submodule of degree zero elements of the localization of a graded module at $P\in\P^{q-1}$. Moreover, if $p_{\lambda}S\subseteq P\in D_+(\alpha_{\ell})$ then $\lambda_{\ell}\in\C\setminus 0$ and we can use Remark \ref{aritmetica} to obtain the following Estimate on ideals in the stalk of structural sheaf $\mathcal{O}_{\P^{q-1},P}=S_{(P)}$:

\begin{multline}\label{estimate}
T_{\mathcal{K}}\Big(\sum_k\Big(\lambda_k-\frac{\lambda_{\ell}\alpha_k}{\alpha_{\ell}}\Big)g_k\Big)S_{(P)}\subseteq\sum_k T_{\mathcal{K}}\Big(\Big(\lambda_k-\frac{\lambda_{\ell}\alpha_k}{\alpha_{\ell}}\Big)g_k\Big)S_{(P)}\subseteq \sum_k \Big(\lambda_k-\frac{\lambda_{\ell}\alpha_k}{\alpha_{\ell}}\Big)T_{\mathcal{K}}(g_k)S_{(P)}\subseteq IP_{(P)}
\end{multline}

To prove the ``$\Rightarrow$" assertion in our statement, assume $I$ to be a moduli ideal. We have seen (cf. Proposition \ref{tfullehnecess}) that $I$ is $T_{\mathcal{K}}$-full; hence $\mathscr{F}=\pi^*(I)/\mathscr{T}_{\mathcal{K}}(\sigma)$. Now we verify that $\Delta_1(I)$ is $T_{\mathcal{K}}$-dependent. Since this is clear if $I=\langle 0\rangle$, we assume $I=T_{\mathcal{K}}(f)\neq \langle 0\rangle$, with $f=\sum_k r_kg_k$ for some $r_k\in\Oh$.  We write $r_k=\lambda_k+s_k$, for certain $\lambda_k\in\C$ and $s_k\in\M$, for all $k=1,\ldots,q$. Since $I\neq \langle 0 \rangle$ we can use Remark \ref{easy},\,(v) and Nakayama's Lemma to check $f\not\in\M\Delta_1(I)$. Hence, at least one of the $r_k$'s is an invertible element of $\Oh$. Using Remark \ref{aritmetica} again we may assume $(r_1,\ldots,r_q)=(\lambda_1,\ldots,\lambda_q)\in\C^q\setminus \{0\}$. We consider $P=p_{\lambda}S+\M S\in\P^{q-1}$. Then $P\in\pi^{-1}(\M)$, so that $\pi^*(I)_{P}=I_{\pi(P)}\otimes S_{(P)}=I_{\M}\otimes S_{(P)}=IS_{(P)}$. On the other hand (say if $P\in D_+(\alpha_{\ell})$), our assumption $I=T_{\mathcal{K}}(f)$ together with Estimate (\ref{estimate}) above produces
\begin{multline*}IS_{(P)}=T_{\mathcal{K}}\Big(\sum_k\lambda_k g_k\Big) S_{(P)}\subseteq T_{\mathcal{K}}\Big(\sum_k\Big(\lambda_k-\frac{\lambda_{\ell}\alpha_k}{\alpha_{\ell}}\Big)g_k\Big)S_{(P)}+T_{\mathcal{K}}\Big(\lambda_{\ell}\sigma/\alpha_{\ell}\Big)S_{(P)}\subseteq IP_{(P)}+\mathscr{T}_{\mathcal{K}}(\sigma)_P\subseteq IS_{(P)}.
\end{multline*}
It follows that $IP_{(P)}+\mathscr{T}_{\mathcal{K}}(\sigma)_P=IS_{(P)}$. Now we use Nakayama's Lemma to obtain $\pi^*(I)_{P}=IS_{(P)}=\mathscr{T}_{\mathcal{K}}(\sigma)_P$. Hence $P\not\in \mbox{Supp}\mathscr{F}$, which completes the proof that $\Delta_1(I)$ is $T_{\mathcal{K}}$-dependent.

To prove the ``$\Leftarrow$" assertion in the statement $I$ is assumed to be $T_{\mathcal{K}}$-full and $\Delta_1(I)$ is assumed to be $T_{\mathcal{K}}$-dependent. Then $I=T_{\mathcal{K}}(\Delta_1(I))$ and there exists some $P\in\pi^{-1}(\M)\setminus \mbox{Supp}\mathscr{F}$. It follows that $$\pi^*(I)_{P}=\pi^*(T_{\mathcal{K}}(\Delta_1(I))_{P}=\mathscr{T}_{\mathcal{K}}(\sigma)_P.$$ We make use of the decomposition $S=\C[\alpha_1,\ldots,\alpha_q]+\M S$ to simplify our argument. Indeed, we obtain $P=P\cap\C[\alpha_1,\ldots,\alpha_q]+P\cap\M S=P\cap\C[\alpha_1,\ldots,\alpha_q]+\M S$. Therefore, there exists some $(\lambda_1,\ldots,\lambda_q)\in\C^q\setminus \{0\}$, $p_{\lambda}\supseteq P\cap\C[\alpha_1,\ldots,\alpha_q]$, such that the homogeneous prime ideal $p_{\lambda}S+\M S$ also belongs to $\pi^{-1}(\M)\setminus \mbox{Supp}\mathscr{F}$.

With this simplification - i. e., assuming $P=p_{\lambda}S+\M S\in D_+(\alpha_{\ell})$, as before - the displayed equality of stalks above reads $IS_{(P)}=T_{\mathcal{K}}\Big(\lambda_{\ell}\sigma/\alpha_{\ell}\Big)S_{(P)}.$ We will prove $I=T_{\mathcal{K}}(f)$, for $f=\sum_k\lambda_kg_k\in\Oh$. Estimate (\ref{estimate}) here gives 

\begin{multline*}
IS_{(P)}\subseteq T_{\mathcal{K}}\Big(\sum_k\Big(\lambda_k-\frac{\lambda_{\ell}\alpha_k}{\alpha_{\ell}}\Big)g_k\Big)S_{(P)}+T_{\mathcal{K}}\Big(\sum_k\lambda_k g_k\Big)S_{(P)}\subseteq IP_{(P)}+T_{\mathcal{K}}\Big(\sum_k\lambda_k g_k\Big)S_{(P)}\subseteq IS_{(P)}.
\end{multline*}
We obtain $IP_{(P)}+T_{\mathcal{K}}(\sum_k\lambda_kg_k)S_{(P)}=IS_{(P)}$ and by Nakayama's Lemma again we deduce $T_{\mathcal{K}}(f)S_{(P)}=IS_{(P)}$. 

Since $T_{\mathcal{K}}(f)\subseteq I$, we should verify $I\subseteq T_{\mathcal{K}}(f)$. It suffices to check that if $h\in I$ and there are some $g\in T_{\mathcal{K}}(f)$ and $G_1,G_2\in S$, homogeneous of same degree with $G_2\not\in P$ such that $G_2h=gG_1$, then $h\in T_{\mathcal{K}}(f)$. With this simplification, an argument with divisibility and degree in the unique factorization domain $S$ shows $\xi h=gH$, for some homogeneous element $H\in S$ and some degree zero factor $\xi$ of $G_2$. In particular, $\xi\in\Oh\setminus \M$. Again by degree reasons, $H$ has degree zero, i.e., it belongs to $\Oh$. This shows that $h\in T_{\mathcal{K}}(f)$. Hence $I\subseteq T_{\mathcal{K}}(f)$ and we have shown that $I$ is a moduli ideal of $\Oh$.\end{proof}

We will keep the notation as in the proof of Theorem \ref{main}. Then we have the corollary below. 

\begin{Corollary}\label{pratico}
Let $I\subset \Oh$ be an ideal and let $\Delta_1(I)=\langle g_1,\ldots,g_q\rangle $. Then $I$ is a moduli ideal if and only if $I=T_{\mathcal{K}}(\sum_k\lambda_kg_k)$ for $p_{\lambda}$ varying in a (non empty) Zariski open subset of $\P_{\C}^{q-1}$.
\end{Corollary}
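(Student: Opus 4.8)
The plan is to obtain the corollary directly from the proof of Theorem \ref{main}, by reinterpreting the single point $P \in \pi^{-1}(\M) \setminus \mbox{Supp}\mathscr{F}$ produced there as the collection of closed points of a whole open family. I keep the notation of that proof: $\Delta_1(I) = \langle g_1, \ldots, g_q \rangle$, $\sigma = \sum_i g_i \alpha_i$, and $\mathscr{F} = \pi^*(T_{\mathcal{K}}(\Delta_1(I)))/\mathscr{T}_{\mathcal{K}}(\sigma)$ on $\P^{q-1}$, and I identify the fiber $\pi^{-1}(\M)$ with $\P^{q-1}_{\C}$, whose closed points are the homogeneous primes $P_\lambda := p_\lambda S + \M S$ for $\lambda \in \C^q \setminus \{0\}$.

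First I would dispose of the case where $I$ is not $T_{\mathcal{K}}$-full. Then for every $\lambda$ we have $T_{\mathcal{K}}(\sum_k \lambda_k g_k) \subseteq T_{\mathcal{K}}(\Delta_1(I)) \subsetneq I$, so no admissible $\lambda$ exists; and simultaneously $I$ fails to be a moduli ideal by Proposition \ref{tfullehnecess}. Thus both sides of the claimed equivalence are false, and I may assume henceforth that $I$ is $T_{\mathcal{K}}$-full, so that $\mathscr{F} = \pi^*(I)/\mathscr{T}_{\mathcal{K}}(\sigma)$.

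The core of the argument is the pointwise dictionary
\[ P_\lambda \notin \mbox{Supp}\mathscr{F} \quad\Longleftrightarrow\quad I = T_{\mathcal{K}}\Big(\sum_k \lambda_k g_k\Big), \]
which I would read off from the two halves of the proof of Theorem \ref{main}. For the implication ``$\Leftarrow$'' the computation of the ``$\Rightarrow$'' half applies: given $I = T_{\mathcal{K}}(\sum_k \lambda_k g_k)$, pick $\ell$ with $\lambda_\ell \neq 0$ so that $P_\lambda \in D_+(\alpha_\ell)$; then Estimate (\ref{estimate}) sandwiches $I S_{(P)}$ between itself and $I P_{(P)} + \mathscr{T}_{\mathcal{K}}(\sigma)_P$, and Nakayama's Lemma forces $\pi^*(I)_P = \mathscr{T}_{\mathcal{K}}(\sigma)_P$, i.e.\ $P_\lambda \notin \mbox{Supp}\mathscr{F}$. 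For ``$\Rightarrow$'' the ``$\Leftarrow$'' half applies: from $P_\lambda \notin \mbox{Supp}\mathscr{F}$ one has the stalk equality $\pi^*(I)_P = \mathscr{T}_{\mathcal{K}}(\sigma)_P$, and the same Estimate together with the unique factorization argument in $S$ yields $I = T_{\mathcal{K}}(\sum_k \lambda_k g_k)$.

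Granting the dictionary, the admissible $\lambda$ are exactly the closed points of $W := \pi^{-1}(\M) \setminus \mbox{Supp}\mathscr{F}$, which is open in $\pi^{-1}(\M) \cong \P^{q-1}_{\C}$ since $\mbox{Supp}\mathscr{F}$ is closed. To conclude I would chain: $I$ is a moduli ideal $\iff \Delta_1(I)$ is $T_{\mathcal{K}}$-dependent (Theorem \ref{main}, using $T_{\mathcal{K}}$-fullness) $\iff \pi^{-1}(\M) \not\subset \mbox{Supp}\mathscr{F} \iff W \neq \emptyset$. When $W \neq \emptyset$, irreducibility of $\P^{q-1}$ makes it a non-empty Zariski open subset whose closed points $p_\lambda$ are precisely the admissible ones, which is the assertion; conversely any single admissible $\lambda$ already realizes $I = T_{\mathcal{K}}(\sum_k \lambda_k g_k)$, so $I$ is a moduli ideal. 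I expect the only delicate bookkeeping to be verifying that each closed point $P_\lambda$ indeed lies in some $D_+(\alpha_\ell)$ with $\lambda_\ell \neq 0$, and noting that the reduction $r_k = \lambda_k + s_k$ from the proof of Theorem \ref{main} is unnecessary here, since we begin from a genuinely constant combination $\sum_k \lambda_k g_k$; everything else is a transcription of stalk-level identities already in hand.
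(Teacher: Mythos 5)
Your proposal is correct and follows essentially the same route as the paper: both identify the admissible $\lambda$ with the closed points of the non-empty open set $\pi^{-1}(\M)\setminus\operatorname{Supp}\mathscr{F}$ (the paper transfers it to $\P^{q-1}_{\C}$ via the morphism $\rho$ induced by $\C[\alpha_1,\ldots,\alpha_q]\hookrightarrow S$, which is the same as your identification of the fiber), and both reuse the stalk computations from the two halves of the proof of Theorem \ref{main} to get $I=T_{\mathcal{K}}(\sum_k\lambda_kg_k)$ at each such point. Your explicit pointwise dictionary and separate treatment of the non-$T_{\mathcal{K}}$-full case are just a more detailed write-up of what the paper compresses into ``proceed as in its proof.''
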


\begin{proof} For the proof of the non-trivial part of the statement, assume that $I$ is a moduli ideal. The canonical inclusion $\C\rightarrow \Oh$ induces $\C[\alpha_1,\ldots,\alpha_{q}]\rightarrow S$, which is a graded inclusion of $\C$-algebras; hence we obtain a proper and dominant morphism of schemes $\rho:\P^{q-1}\rightarrow \P_{\C}^{q-1}$. From the Theorem, we know that $\Delta_1(I)$ is $T_{\mathcal{K}}$-dependent, so that $\pi^{-1}(\M)\setminus \mbox{Supp}\mathscr{F}$ is a non empty Zariski open subset of $\pi^{-1}(\M)$. It follows that its image under $\rho$ is a non-empty open Zariski subset $U\subset\P_{\C}^{q-1}$. Homogeneous prime ideals $p_{\lambda}$ in $U$ correspond bijectively, via $\rho$, to homogeneous prime ideals of type $p_{\lambda}S+\M S\in\pi^{-1}(\M)\setminus \mbox{Supp}\mathscr{F}$. Since the Theorem also guarantees that $I$ is $T_{\mathcal{K}}$-full, we can proceed as in its proof to show that $I=T_{\mathcal{K}}(\sum_k\lambda_kg_k)$, for all $\lambda=(\lambda_1,\ldots,\lambda_q)$ such that $p_{\lambda}\in U$.\end{proof}

\begin{Remark} The practical character and the usefulness of the above Corollary comes from its consonance with the fact outlined in Example \ref{guia}. Precisely, it says that computing the moduli ideal of a sufficiently general $\C$-linear combination of generators of $\Delta_1(I)$ - accessible in practical examples, as we have illustrated in Section \ref{computador} - reveals whether $I\subset \Oh$ is a moduli ideal.\end{Remark}

We illustrate how one can use the software SINGULAR to check whether an ideal $I\subset\Oh$ is a moduli ideal, without knowing a priori the function germ which realizes it as such. We do this with the ideals presented in Examples \ref{duasvariaveis} and \ref{tresvariaveis}, respectively.

\begin{Example}(Example \ref{duasvariaveis}, continued): We have computed the anti-derivatives ideal of $$I=\langle 3xy^2+x^6,y^3,x^5y,x^7\rangle\subset \C\{x,y\}$$ as $\Delta_1(I)=\langle y^3+x^5y, x^4y^2,x^7,x^6y\rangle\subset I$. A SINGULAR check shows that $I$ is $T_{\mathcal{K}}$-full. We consider the corresponding sheaf $\mathscr{F}$ in $\P^3$ and investigate its support. In order to do so we use SINGULAR again:

\begin{flushleft}
\texttt{
\noindent > ring r=0,(x,y,a(1..4)),ds;\\
> ideal i=3xy2+x6,y3,x5y,x7;\\
> poly s=a(1)*(y3+x5y)+a(2)*x4y2+a(3)*x7+a(4)*x6y;\\
> ideal m=x,y;\\
> ideal t=s,m*diff(s,x),m*diff(s,y);\\
> ideal q=quotient(t,i);\\
> ideal Q=std(q);\\
> Q;}
\end{flushleft}

The output is a list with twenty homogeneous generators of $\Big(\mathscr{T}_{\mathcal{K}}(\sigma):\pi^*(I)\Big)$, the sixth element of which being
\begin{multline*}
15\alpha_1^3\alpha_3+52x\alpha_1^2\alpha_3\alpha_4-121y\alpha_1\alpha_2\alpha_3\alpha_4+36y\alpha_1^2\alpha_4^2+15x^2\alpha_1\alpha_3\alpha_4^2+30xy\alpha_1^2\alpha_2^2+28x^2\alpha_1\alpha_3\alpha_4^2+\\24xy\alpha_1\alpha_4^3+42x^3\alpha_1^2\alpha_2\alpha_4+24x^2y\alpha_1\alpha_2^2\alpha_4-54x^4\alpha_2^2\alpha_3\alpha_4+41x^4\alpha_1\alpha_2\alpha_4^2+4x^3y\alpha_2^2\alpha_4^2+14x^5\alpha_2\alpha_4^3.\end{multline*}

Clearly this element does not belong to $\M[\alpha_1,\alpha_2,\alpha_3,\alpha_4]$ due to the presence of the term $15\alpha_1^3\alpha_3$. It follows that $\Delta_1(I)$ is $T_{\mathcal{K}}$-dependent. According to the proof of Theorem, any $\C$-linear combination \[\lambda_1(y^3+x^5y)+\lambda_2x^4y^2+\lambda_3x^7+\lambda_4x^6y\] with both $\lambda_1\neq 0$ and $\lambda_3\neq 0$ represents a function germ $f$, with an isolated singularity, of which $I$ is a moduli ideal.
\end{Example}

\medskip

\begin{Example}(Example \ref{tresvariaveis}, continued): We have computed the anti-derivatives ideal of $$I=\langle x^3,x^2y,x^2z,xy^3z,y^4z,xy^2z^2,y^3z^2,y^2z^3\rangle\subset \C\{x,y,z\}$$ as $\Delta_1(I)=\langle x^3,x^2y^2z,y^3z^2\rangle$. A SINGULAR check shows that $I$ is $T_{\mathcal{K}}$-full. We consider the corresponding sheaf $\mathscr{F}$ in $\P^{2}$ and investigate its support. In order to do this we use SINGULAR again:

\begin{flushleft}
\texttt{
\noindent > ring r=0,(x,y,z,a(1..3)),ds;\\
> ideal i=x3,x2y,x2z,xy3z,y4z,xy2z2,y3z2,y2z3;\\
> poly s=a(1)*(x3)+a(2)*(x2y2z)+a(3)*(y3z2);\\
> ideal m=x,y,z;\\
> ideal t=s,m*diff(s,x),m*diff(s,y),m*diff(s,z);\\
> ideal q=quotient(t,i);\\
> ideal Q=std(q);\\
> Q;}
\end{flushleft}

The output now is a list with ten homogeneous generators of $\Big(\mathscr{T}_{\mathcal{K}}(\sigma):\pi^*(I)\Big)$. Its fifth element is $$3\alpha_1^2\alpha_3^2-xy\alpha_1\alpha_2^2\alpha_3$$ and again, this element does not belong to $\M[\alpha_1,\alpha_2,\alpha_3]$ due to the presence of the term $3\alpha_1^2\alpha_3^2$. Hence $\Delta_1(I)$ is $T_{\mathcal{K}}$-dependent. The proof of Theorem guarantees that any $\C$-linear combination \[\lambda_1x^3+\lambda_2x^2y^2z+\lambda_3y^3z^2\] with both $\lambda_1\neq 0$ and $\lambda_3\neq 0$ represents a function germ $f$, defining germ of a surface with a non-isolated singularity at $0\in\C^3$, of which $I$ is a moduli ideal.
\end{Example}









\bibliographystyle{amsalpha}
\bibliography{Biblio}

\end{document}